\newcommand{\mbb}[1]{\mathbb{#1}}
\newcommand{\R}{\mbb{R}}
\renewcommand{\epsilon}{\varepsilon}
\DeclareMathOperator{\Ima}{Im}
\newcommand{\norm}[1]{\ensuremath{\left\|#1\right\|}}
\newcommand{\bdr}[1]{bdr\ #1}
\newtheorem{theorem}{Theorem}[section]
\newtheorem{remark}[theorem]{Remark}
\DeclarePairedDelimiter\floor{\lfloor}{\rfloor}
\title{Rigorous cubical approximation and persistent homology \\
       of continuous functions}
\author{Pawe{\l} D{\l}otko \\
        Inria Saclay -- Ile-de-France \\
        1 rue Honor\'e d'Estienne d'Orves \\
        91120 Palaiseau, France
  \and
        Thomas Wanner \\
        Department of Mathematical Sciences \\
        George Mason University \\
        Fairfax, VA 22030, USA }
\date{ \today}
\begin{document}         
\maketitle
\begin{abstract}
The interaction between discrete and continuous mathematics lies at the heart
of many fundamental problems in applied mathematics and computational sciences.
In this paper we discuss the problem of discretizing vector-valued functions 
defined on finite-dimensional Euclidean spaces in such a way that the discretization
error is bounded by a pre-specified small constant. While the approximation
scheme has a number of potential applications, we consider its usefulness in
the context of computational homology. More precisely, we demonstrate that our
approximation procedure can be used to rigorously compute the persistent
homology of the original continuous function on a compact domain, up to small
explicitly known and verified errors. In contrast to other work in this area,
our approach requires minimal smoothness assumptions on the underlying function.
\end{abstract}
\tableofcontents
\newpage
\section{Introduction}
\label{sec:introduction}
In many applied situations it is necessary to replace a given smooth function
by a discretized version which in some sense is close to the given original
mapping. In this paper we discuss a number of algorithms for approximating a
continuous function $f : D \rightarrow \mathbb{R}^m$, defined on a compact
rectangular domain $D \subset \mathbb{R}^n$, using piecewise constant
functions. In order to achieve approximations with mathematically verified
approximation bounds, our approach relies on the use of rigorous computer
arithmetic. One such example is interval arithmetic~\cite{moore}, and it will
form the foundation of the algorithms described in the present paper. However,
our approach can also be used with any other rigorous computer arithmetic which
provides rigorous enclosures of function values. For example, our algorithms
can readily be adapted to employ alternative tools such as affine
arithmetic~\cite{affineArithmetic}, generalized interval
arithmetic~\cite{generalizedIntervalArithmetic}, or in fact,
any other rigorous arithmetic capable of providing range enclosures.

Upon successful completion, the algorithms presented in this paper will
return a decomposition of the rectangular domain~$D$ into a collection of
compact $n$-dimensional rectangles, any two of which only intersect in a
subset of their topological boundaries. Furthermore, each rectangle~$R$
in the collection is assigned a value~$val(R)$ in~$\mathbb{R}^m$. The
approximating function~$\Box f$ is then defined in the following way.
For every rectangle~$R$ in the constructed decomposition, the function
value of~$\Box f$ in the interior of~$R$ is given by~$val(R)$. On the other
hand, if~$x \in D$ lies in the intersection of~$k$ rectangles~$R_1, \ldots,
R_k$, then we define~$\Box f(x) = \min_{i = 1,\ldots,k} val(R_i)$. In this
way, the decomposition of~$D$ produced upon successful completion
of the algorithm gives rise to a lower semi-continuous piecewise constant
approximation~$\Box f$ of the given continuous function~$f$. We will see
later that the algorithm in fact guarantees the inequality
\begin{displaymath}
  \| f - \Box f \|_{\infty} < \epsilon \; ,
  \quad\mbox{ where $\epsilon > 0$ denotes the input parameter of the
  algorithm,}
\end{displaymath}
and where~$\| \cdot \|_\infty$ denotes the usual maximum norm. This procedure
is illustrated in the two panels of Figure~\ref{fig:introex}. In the left
panel, the approximating function~$\Box f$ is shown for the one-dimensional
function~$f(x) = 2 - 25 x + 108 x^2 - 162 x^3 + 81 x^4$ on the
interval~$D = [0,1]$, and with approximation parameter~$\epsilon = 1/2$.
The right panel contains the approximation for the so-called Ackley
function, which is one of the standard test functions in nonlinear
optimization and which will be discussed in more detail later in
this paper.
\begin{figure}[tb]
  \centering
  \includegraphics[width=7cm]{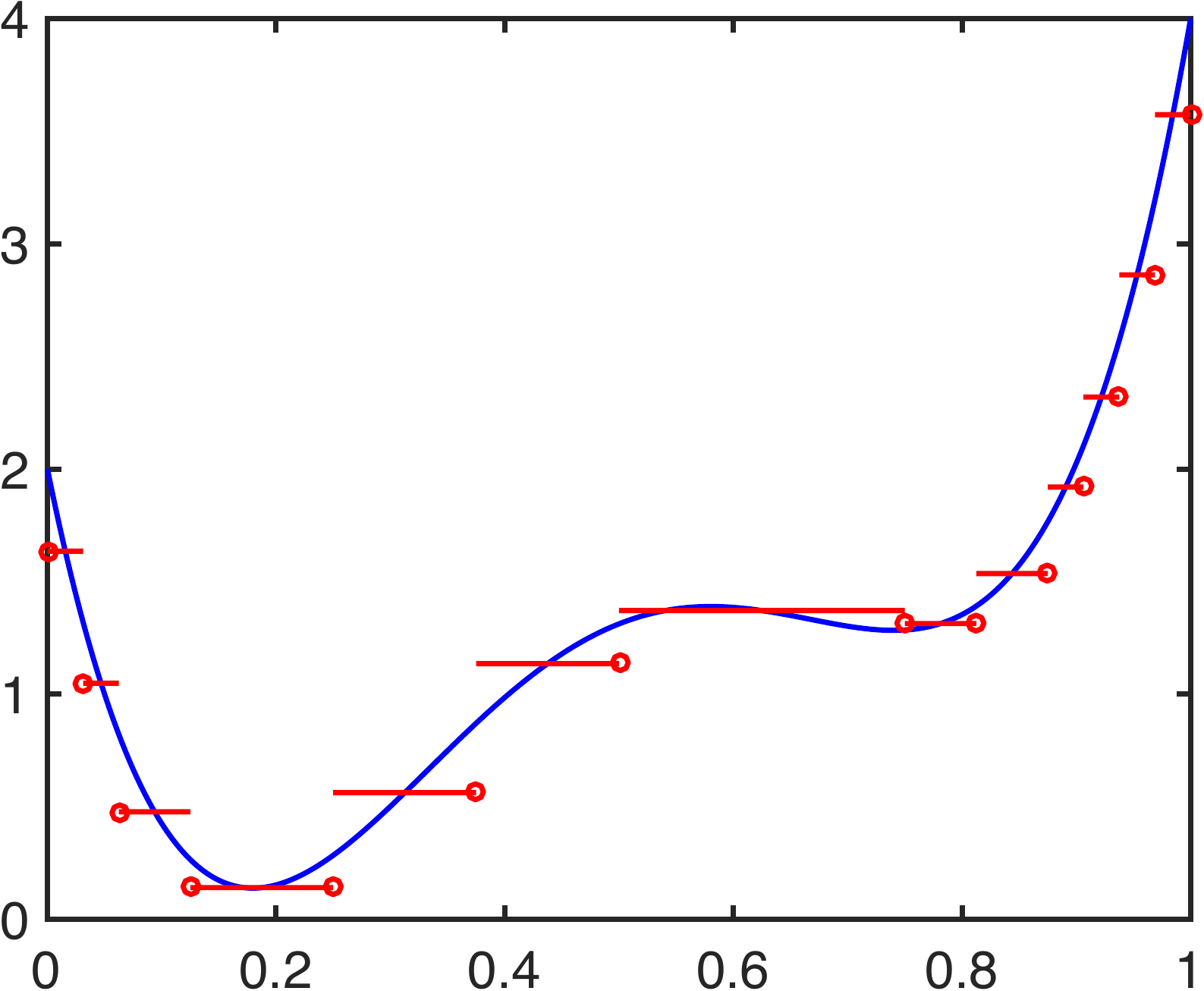}
  \hspace{0.3cm}
  \includegraphics[width=8.2cm]{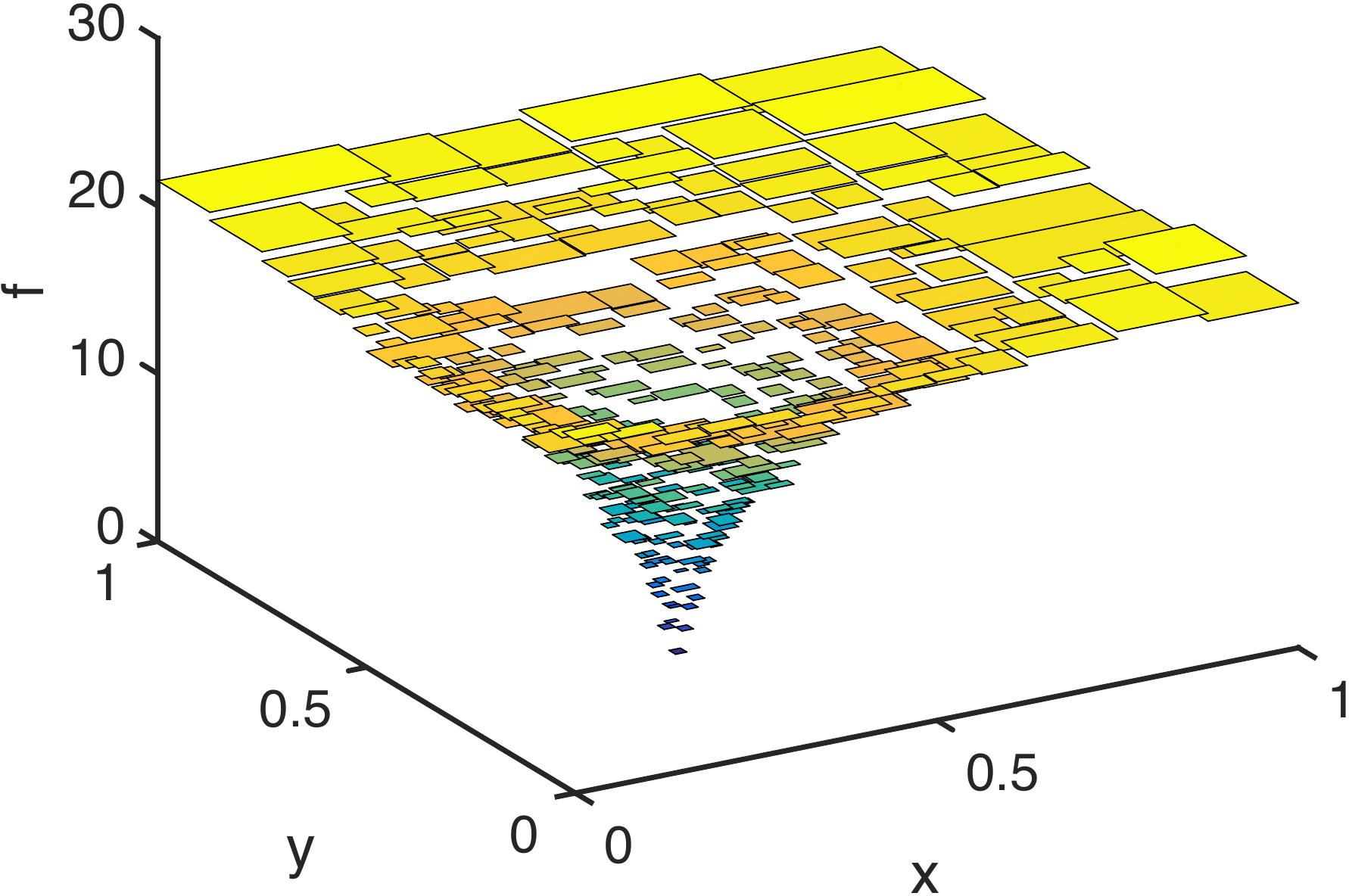}
  \caption{Sample approximating functions~$\Box f$. The left image is
           for the one-dimensional function~$f(x) = 2 - 25 x + 108 x^2
           - 162 x^3 + 81 x^4$ on the interval~$D = [0,1]$, and with
           approximation parameter~$\epsilon = 1/2$. The function~$f$
           is shown in blue, while its approximation~$\Box f$ is depicted
           in red. Red dots indicate the function values at the
           points in~$D$ which are contained in more than one intervals
           of the final decomposition. The right panel shows an approximation
           for the Ackley function with $\epsilon = 2$.}
  \label{fig:introex}
\end{figure}

The approximation algorithms presented in this paper can in principle
be applied in a number of situations. For example, in the context of global 
optimization they allow one to rigorously narrow down the search for global
minima or maxima of a real-valued function, as can easily be seen in the
examples of Figure~\ref{fig:introex}. Rather than pursuing such applications,
we confine ourselves to the computation of the persistent homology of a 
function. One-dimensional persistent homology was introduced
in~\cite{zomorodianPersistence}, and since then has become one of the
main topological tools in the applied sciences for studying the shape
of functions or spaces. Its popularity can be attributed to a variety
of factors:
\begin{itemize}
\item Persistent homology provides quantitative information about the object
      of interest, including information about its shape,
\item the obtained information is both metric and coordinate free, and
\item the information is preserved under continuous deformations, i.e.,
      it is invariant with respect to isometries.
\item Persistence captures topological information at different scales,
\item it is robust with respect to the influences of random noise, and 
\item efficient algorithms are available for its computation.
\end{itemize}
At present, persistent homology can only be determined algorithmically
for the case of scalar-valued functions, i.e., only one-dimensional persistence
is amenable to a computational treatment. For this, one needs to find a suitable
filtration of a finite cell complex, and the results of this paper will enable
its efficient construction. Due to deep results from quiver theory~\cite{gunnar},
there are no finite invariants which completely characterize multi-dimensional
persistent homology. Nevertheless, there are many groups working on providing
partial invariants for multi-dimensional persistent homology that can be
used in practice.

The algorithms presented in this paper for the case of functions $f : D \subset
\mathbb{R}^n \rightarrow \mathbb{R}^m$ provide discretizations of~$f$ with
rigorous error bounds. These discretizations can immediately be used in the
computation of one-dimensional persistent homology, i.e., in the scalar-valued
case $m=1$. We would like to point out, however, that the approximations can
potentially also be used in the context of recently developed algorithms
which compute partial invariants of multi-dimensional persistent homology.
In order to specifically accommodate the computation of persistent homology,
we consider specific approximations in which, in addition to the rectangular
decomposition of the domain~$D$, we also keep track of all the intersections
of the rectangles in the decomposition. This is achieved by constructing a
weighted cell complex which will later be used to compute persistent homology
or some invariants of multi-dimensional persistent homology.

Approximating continuous functions is a part of the classical simplicial
approximation theorem~\cite{introToAT}. However, we are not aware of any
attempts to create lower semi-continuous approximations of continuous
functions using rigorous computer arithmetic. In the context of differentiable
functions, the work~\cite{miro} comes close in spirit to the present paper.
The authors use a generalization of quad-trees and oct-trees to compute
compatible cellular decompositions for level sets of a given differentiable
function. Then, after computing the generators of the associated homology
groups at each level of the filtration, together with their inclusions,
they obtain the persistent homology of a differentiable function. It is
worth mentioning that through the computation of the persistence of
inclusions between two fixed level sets, the authors of~\cite{miro}
obtain an exact persistence result for the chosen levels. Similar idea to compute standard persistent homology was used in~\cite{mw10}. This, however,
requires the successful termination of their algorithm. Unfortunately,
even in the examples presented in their paper, their algorithm cannot
validate a significant number of levels, which is due to grid alignment
issues which were first pointed out in~\cite{cochran:etal:13a}. They arise
naturally in the setting of~\cite{miro}, since the authors have to use binary
subdivisions for the persistence computations, and since varying the threshold
level ensures that the level curves will frequently come close to the binary
subdivision lines. In the context of one fixed level set, this effect has been
one of the shortcomings of the method in~\cite{day:etal:09a}, but it could be
removed through randomized subdivisions in~\cite{cochran:etal:13a}. Unfortunately,
the simple randomization trick cannot be applied in the setting of~\cite{miro}.
Moreover, the total computational cost of the method in~\cite{miro} may in
some cases be higher than the one of our method presented below. This is due
to the fact that in~\cite{miro} the authors need to exactly verify every
intermediate level set, and then perform independent homology computations
at each filtration level. 

Other approaches to approximating the persistence of functions, which also take
errors into account, were presented in~\cite{Edels2} and~\cite{Edels1}. In the
former paper, the authors employ oct-trees to approximate the persistence of a
digital image as accurately as possible for a fixed number of simplification
steps. In fact, one of our algorithms, the one presented in
Section~\ref{sec:greedyApproximation}, is directly inspired by this
approach. Finally, the paper~\cite{Edels1} is concerned with persistent
homology computations in the presence of non-uniform error models. 

The remainder of this paper is organized as follows. In
Section~\ref{sec:PersistentHomology} we give a short introduction to
rectangular CW-complexes, homology and (multi-dimensional) persistent
homology, and we survey necessary results on the stability of persistent
homology. Section~\ref{sec:cubapprox} contains the main results of the
paper. After collecting some basic results from interval arithmetic in
Section~\ref{sec:intervalArithmetic}, we then turn our attention to the
main algorithms. In Section~\ref{sec:approximationContFunctions} we begin
with a method for computing a piecewise constant approximation of a
continuous function~$f : D \subset \mathbb{R}^n \rightarrow \mathbb{R}^m$,
and discuss termination criteria as well as complexity issues for a large
class of functions~$f$. This is followed in Section~\ref{sec:SubdivAlgorithm}
by a rectangle subdivision algorithm which, during the subdivision process, keeps
track of all boundary elements, and which establishes a lower semi-continuous
approximation. In the end, this method allows us to construct
a cell complex which is suitable for the computation of persistence. This will
be outlined in detail in Section~\ref{sec:cubicalApproximationPersistence}, 
where we use the subdivision algorithm to first construct a filtered complex,
and then its (multi-dimensional) persistent homology. It will be shown that the
resulting persistence diagram is close to the persistent homology of the 
underlying function~$f$. A second approximation algorithm, which is motivated
by the approach in~\cite{Edels2}, is the subject of
Section~\ref{sec:greedyApproximation}. This time, we aim to approximate
the persistent homology of~$f$ as effectively as possible using a limited
number of subdivisions. Finally, Section~\ref{sec:experiments} contains a number
of numerical case studies.
\section{Persistent Homology}
\label{sec:PersistentHomology}
In this section we provide a brief introduction to persistent homology,
in order to keep the present paper as self-contained and accessible as
possible. For a more-detailed introduction to standard algebraic topology
we refer the reader to~\cite{massey}, an extensive introduction to applied
and computational topology can be found in~\cite{herbert}.
\subsection{Rectangular CW-Complexes}
\label{sec:PersistentHomology1}
In the following, the term \emph{interval} is always used for a compact
interval $I = [a,b] \subset \mathbb{R}$ with $a \leq b$. We say that the
interval~$I$ is \emph{degenerate} if $a = b$, otherwise it is called
\emph{nondegenerate}. In order to describe the algebraic boundary of
intervals, we say that a degenerate interval has no faces, while the
nondegenerate interval $I = [a,b]$ has the two faces~$[a] := [a,a]$ and
$[b] := [b,b]$.
Intervals as defined above are the building blocks of higher-dimensional
sets in the following sense. A \emph{rectangle} in $\mathbb{R}^d$ is a product
$Q = Q_1 \times Q_2 \times \ldots \times Q_d$ of~$d$ intervals $Q_1, Q_2,
\ldots, Q_d$. The dimension of~$Q$, abbreviated as~$\dim{Q}$, is the number
of nondegenerate intervals among $Q_1, Q_2, \ldots, Q_d$. Consider now
two rectangles $Q = Q_1 \times \ldots \times Q_d$ and $P = P_1 \times
\ldots \times P_d$. Then the rectangle~$P$ is called a \emph{primary
face} of~$Q$ if $P \subset Q$, the dimensions satisfy $\dim{P} =
\dim{Q} - 1$, and if there exists an index~$j$ such that~$P_j$ is a
face of~$Q_j$, in the sense defined above for intervals. Notice that
in this case, the index~$j$ is uniquely determined, and we have
$\dim P_i = \dim Q_i$ for all $i \neq j$. We would like to point out,
however, that it is not required that the identity $P_i = Q_i$ holds
for any of these indices --- only the inclusions $P_i \subset Q_i$
are necessary. Finally, the rectangle~$P$ is called a \emph{face} of~$Q$,
if either there is a descending sequence of primary faces joining~$Q$
to~$P$, or if~$P$ is the empty set.

We now turn to the definition of the boundary of a rectangle. For an
interval $I = [a,b]$, its \emph{boundary} is defined as $\bdr{I} =
\{a,b\}$ if $a \neq b$, and as $\bdr{I} = \emptyset$ if $a > b$. The
boundary of a higher-dimensional rectangle $Q = Q_1 \times \ldots
\times Q_d$ is defined as the union
\begin{displaymath}
  \bdr{Q} = \bigcup_{i=1}^d Q_1 \times \ldots \times \bdr{Q_i}
  \times \ldots \times Q_d \; .
\end{displaymath}
One can easily see that the boundary~$\bdr{Q}$ corresponds to boundary
of the $\dim{Q}$-dimensional manifold~$Q$.

The above-defined rectangles are the building blocks for more general
structured sets. For the purposes of this paper, we consider a
\emph{rectangular CW-complex}, which can be defined as follows.
A {\em rectangular structure\/} is a finite collection~$\mathcal{Q}$
of rectangles in~$\mathbb{R}^d$ such that for any $P, Q \in \mathcal{Q}$
we either have $P \cap Q = \emptyset$, or $P \cap Q$ is a common face of
both~$P$ and~$Q$ which in addition belongs to~$\mathcal{Q}$. Furthermore,
we assume that for any rectangle $Q \in \mathcal{Q}$, its boundary satisfies
\begin{displaymath}
  \bdr{Q} = \bigcup \left\{ P \in \mathcal{Q} \; : \;
  P \mbox{ is a primary face of } Q \right\} \; .
\end{displaymath}
Finally, a {\em rectangular CW-complex\/}~$|\mathcal{Q}|$ is given by the
union of some rectangular structure~$\mathcal{Q}$, i.e., it is the subset
of Euclidean space which is occupied by the rectangles in~$\mathcal{Q}$.
Any $0$-dimensional rectangle in~$\mathcal{Q}$ is called a {\em vertex\/},
and if $Q \in \mathcal{Q}$ is an $n$-dimensional rectangle, then the set
$Q \setminus \bdr{Q}$ is called an {\em $n$-cell\/}. In order to
illustrate this definition further, we note that the rectangles in
a rectangular structure~$\mathcal{Q}$ satisfy the following two properties.
\begin{itemize}
\item Let~$\mathcal{Q}^{(k)}$ denote the rectangles in~$\mathcal{Q}$
of dimension~$k$. Then for~$k \in \mathbb{N}_0$ and
rectangles~$P,Q \in \mathcal{Q}^{(k)}$ we either have $P \cap Q = \emptyset$,
or the intersection~$P \cap Q$ is a face of both~$P$ and~$Q$.
\item For $k \in \mathbb{N}$ and $P \in \mathcal{Q}^{(k)}$, the union of
all rectangles in~$\mathcal{Q}^{(k-1)}$ which are primary faces of~$P$ equals
the boundary~$\bdr{P}$.
\end{itemize}
\begin{figure}
  \centering
  \includegraphics[width=15.5cm]{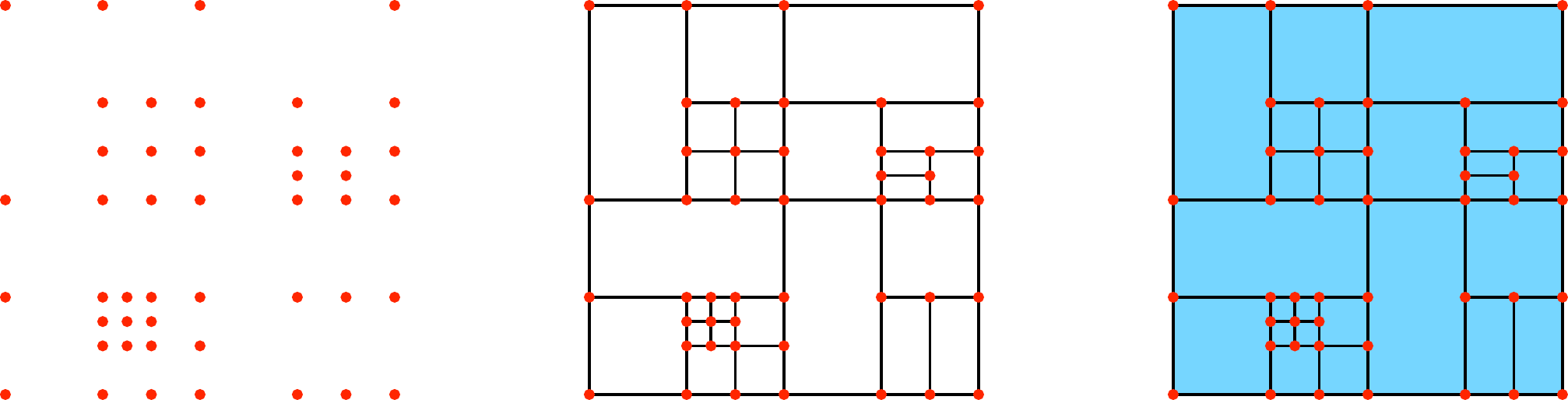}%
  \caption{An example of a rectangular CW-complex. The first image 
           shows the initial collection~$\mathcal{Q}^{(0)}$ of~$46$ 
           zero-dimensional rectangles, the middle image depicts
           the~$70$ one-dimensional rectangles contained in~$\mathcal{Q}^{(1)}$,
           and the last image shows the~$25$ two-dimensional rectangles
           in~$\mathcal{Q}^{(2)}$ which fill in the holes. The final rectangular
           CW-complex is a square.}
  \label{fig:rectCWcomplex}
\end{figure}
Notice that the above description can be used to give an algorithmic
description of building~$\mathcal{Q}$. Initially, one starts with a
collection of zero-dimensional rectangles, i.e., points, in~$\R^d$.
Then one adds one-dimensional rectangles connecting some of the points,
adds two-dimensional rectangles filling holes, etc. For an example
of a rectangular CW-complex, see Figure~\ref{fig:rectCWcomplex}. The
depicted CW-complex decomposes a square into smaller rectangles
through selected dyadic subdivisions. In the remainder of this paper,
we will only consider rectangular CW-complexes of this type, even 
though the above definition is more general. This is due to the fact
that the algorithms presented below avoid the use of derivatives, which
would allow for different adaptive decompositions.
\subsection{Persistent Homology of Rectangular CW-Complexes}
\label{sec:PersistentHomology2}
We now turn our attention to the definition of homology. It was shown
in~\cite[Theorem~3.9]{cw} that every rectangular CW-complex is a CW-complex
in the classical sense, see for example~\cite{massey}. This implies that if
a rectangular CW-complex defines a polyhedron, then its homology is isomorphic
to its singular homology. In order to further define and then efficiently
compute the homology of such a rectangular CW-complex, we make use of the
notion of \emph{incidence coefficients} or \emph{incidence numbers}. In the
simplest case of homology with coefficients in~$\mathbb{Z}_2$, the incidence
number of two rectangles $Q,P \in \mathcal{Q}$ satisfies $\kappa(Q,P) = 1$
if and only if~$P$ is a primary face of~$Q$, and we define $\kappa(Q,P) = 0$
in all other cases. Even for the more involved case of homology with
coefficients in a field with at least three elements, the incidence numbers
of a rectangular CW-complex can still be computed efficiently, for more
details see~\cite[Theorem~3.10]{cw}.

Suppose now that~$X \subset \mathbb{R}^d$ denotes a rectangular CW-complex
with rectangular structure~$\mathcal{Q}$. In order to rigorously define
the \emph{homology} of~$X$ or~$\mathcal{Q}$, we begin by establishing the
notions of chains, cycles, and boundaries. Let~$\mathbb{F}$ denote an
arbitrary coefficient field. Then a \emph{$k$-chain}~$c$ is a formal sum
of the form
\begin{displaymath}
  c = \sum_{A \in \mathcal{Q}^{(k)}} \alpha_A \, A \; ,
  \quad\mbox{ where }\quad
  \alpha_A \in \mathbb{F}
  \quad\mbox{ and }\quad
  \mathcal{Q}^{(k)} = \left\{ Q \in \mathcal{Q} \; : \; \dim Q = k \right\}
  \; .
\end{displaymath}
The set of all $k$-chains forms an additive group, which is called the
\emph{$k$-th chain group} and denoted by~$C_k(\mathcal{Q})$. For a rectangle
$Q \in \mathcal{Q}^{(k)}$, the \emph{boundary}~$\partial Q$ of~$Q$ is the
$(k-1)$-chain given by
\begin{displaymath}
  \partial Q =
  \sum_{P \in \mathcal{Q}^{(k-1)}} \kappa(Q,P) P \; ,
\end{displaymath}
which in view of the above discussion of the incidence
numbers~$\kappa(Q,P)$ is in fact only a sum over the primary faces of~$Q$,
see also~\cite[Section~3.1]{cw}. Through linear extension, this defines
a homomorphism~$\partial : C_k(\mathcal{Q}) \to C_{k-1}(\mathcal{Q})$,
which is called the \emph{boundary operator}. One can show that the
boundary operator satisfies
\begin{displaymath}
  \partial^2 = 
  \partial \circ \partial = 0 \; : \;
  C_k(\mathcal{Q}) \to C_{k-2}(\mathcal{Q}) \; .
\end{displaymath}
A chain $c \in C_k(\mathcal{Q})$ is called a \emph{$k$-cycle}, if
$\partial c = 0$. The set of all $k$-dimensional cycles forms an
additive group which is denoted by~$Z_k(\mathcal{Q})$. A $k$-cycle~$d$
is called a \emph{$k$-boundary}, if there exists a chain $e \in
C_{k+1}(\mathcal{Q})$ such that $\partial e = d$ holds. The
set of all $k$-boundaries is abbreviated by~$B_k(\mathcal{Q})$, and
one can easily see that it is a subgroup of~$Z_k(\mathcal{Q})$. Finally,
the \emph{homology group} of~$\mathcal{Q}$ in dimension~$k$ is then the
quotient group $H_k(\mathcal{Q}) = Z_k(\mathcal{Q})/B_k(\mathcal{Q})$.
From a geometric point of view, the $k$-th homology group~$H_k(\mathcal{Q})$
measures the number of $k$-dimensional holes in~$\mathcal{Q}$, and we
refer the reader to~\cite{herbert, massey} for more details.

As our last concept of this section, we now consider persistent homology.
While we keep our discussion brief, a more extensive introduction can be
found in~\cite{dwphysicad}. Consider a rectangular CW-complex given by the
rectangular structure~$\mathcal{Q}$. Then a subcollection~$\mathcal{Q}_0$
of~$\mathcal{Q}$ is called a \emph{subcomplex} of~$\mathcal{Q}$, if the
set~$\mathcal{Q}_0$ defines a rectangular structure in its own right.
In addition, a sequence of subcomplexes $\emptyset \subset \mathcal{Q}_0
\subset \ldots \subset \mathcal{Q}_{n-1} \subset \mathcal{Q}_n = \mathcal{Q}$
is called a \emph{filtration} of the rectangular CW-complex given
by~$\mathcal{Q}$. Filtrations lie at the heart of persistent homology,
which basically tracks topological changes in the CW-complexes of the
filtration as the index increases. While filtrations of CW-complexes can
arise in a number of ways, the following situation will be central for the 
present paper. Consider a function $f : \mathcal{Q} \rightarrow \mathbb{R}$
which assigns a real value to every rectangle in~$\mathcal{Q}$. Moreover,
assume that~$f$ is increasing in the sense that $f(P) \leq f(Q)$ whenever
$P \in \mathcal{Q}$ is a face of $Q \in \mathcal{Q}$. One can easily see
that if~$a_0 \le a_1 \le \ldots \le a_n$ are the finitely many real numbers
in the range of~$f$, then the sets
\begin{displaymath}
  \mathcal{Q}_k = \left\{ Q \in \mathcal{Q} \; : \;
  f(Q) \le a_k \right\}
\end{displaymath}
are rectangular structures, and therefore $\mathcal{Q}_0 \subset \ldots
\subset \mathcal{Q}_{n-1} \subset \mathcal{Q}_n = \mathcal{Q}$ is a filtration
of the rectangular CW-complex given by~$\mathcal{Q}$. We refer to this setting
as a \emph{filtered rectangular CW-complex}~$|\mathcal{Q}|$ given
by~$(\mathcal{Q},f)$.

Consider now an arbitrary filtration $\mathcal{Q}_0 \subset \ldots \subset
\mathcal{Q}_{n-1} \subset \mathcal{Q}_n$ of rectangular CW-complexes. Then
for $0 \le a \le b \le n$, we have $\mathcal{Q}_a \subset \mathcal{Q}_b$,
which in turn implies $C_k(\mathcal{Q}_a) \subset C_k(\mathcal{Q}_b)$ for
all $k \in \mathbb{N}_0$. This inclusion of the chain groups can be used
to define homomorphisms of the associated homology groups. More precisely,
we define the mapping $\iota^k_{a,b} : H_k(\mathcal{Q}_a) \rightarrow
H_k(\mathcal{Q}_b)$ by mapping any representative $k$-chain~$c$ of a homology
class in~$H_k(\mathcal{Q}_a)$ to the homology class generated by~$c$
in~$H_k(\mathcal{Q}_b)$. One can show that this is well-defined, and that
the mappings~$\iota^k_{a,b}$ are group homomorphisms. Then the \emph{$k$-th
persistent homology groups} are defined as
\begin{displaymath}
  H^{a,b}_k(\mathcal{Q}) = \Ima \iota^k_{a,b} \; ,
\end{displaymath}
i.e., as the images of the homomorphisms~$\iota^k_{a,b}$.

As mentioned above, persistent homology tracks topological changes through the
filtration, and this leads to the following notions for homology classes. We
say that a homology class $\gamma \in H_k(\mathcal{Q}_\ell)$ is \emph{born}
in~$H_k(\mathcal{Q}_\ell)$, if $\gamma \not \in \Ima \iota^k_{\ell-1,\ell}$.
The homology class~$\gamma$ \emph{dies} entering~$H_k(\mathcal{Q}_m)$, if
either it became trivial, or if it merges with an older class
in~$\mathcal{Q}_m$. In this context, a class is called \emph{older},
if it was born earlier than the homology class~$\gamma$. The index values~$\ell$
and~$m$ are called the \emph{birth time} and \emph{death time} of the homology
class~$\gamma$, respectively. We refer the reader to~\cite{herbert} for more
details.
\begin{figure}[tb]
  \centering
  \includegraphics[height=6cm]{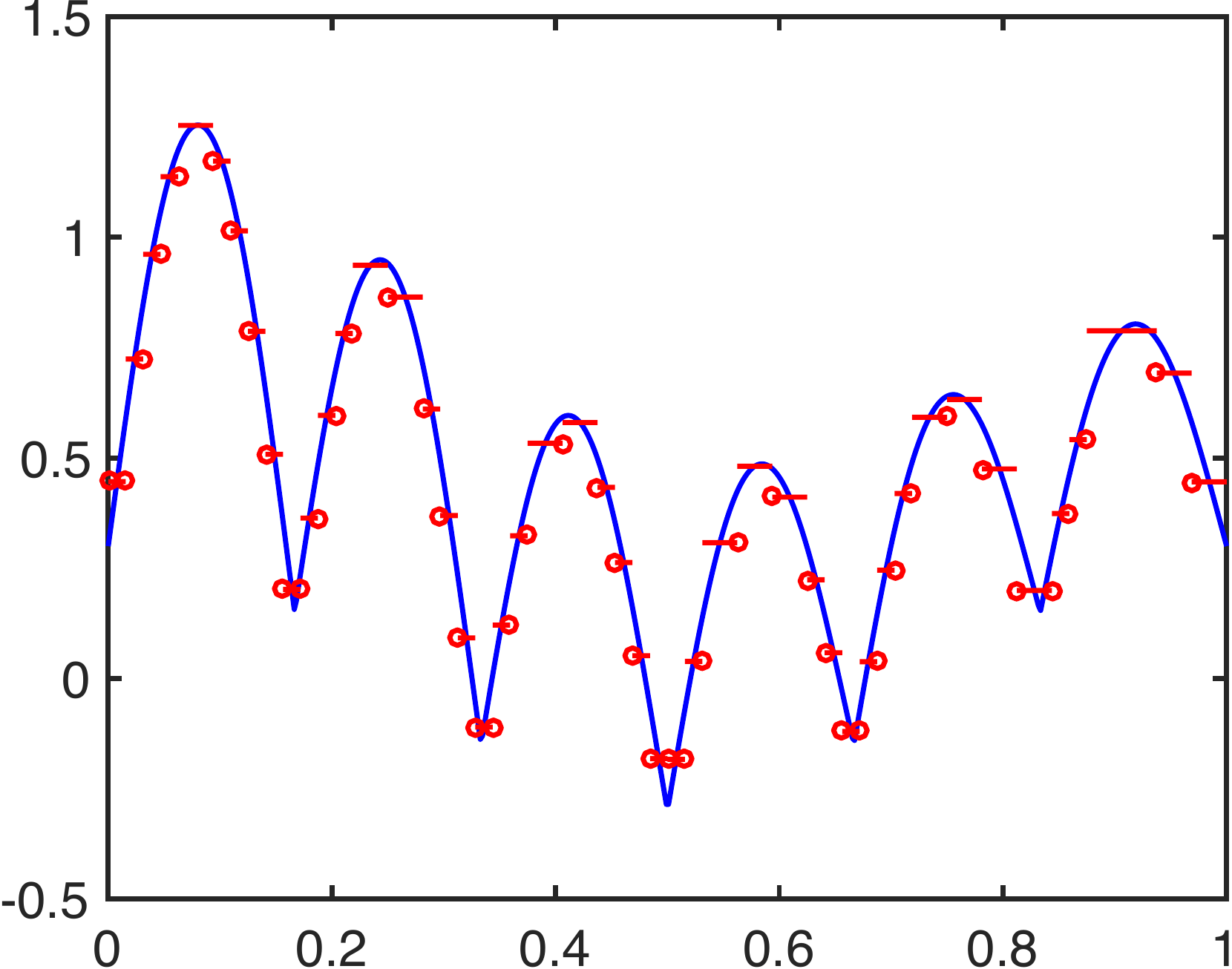}
  \hspace{0.5cm}
  \includegraphics[height=6cm]{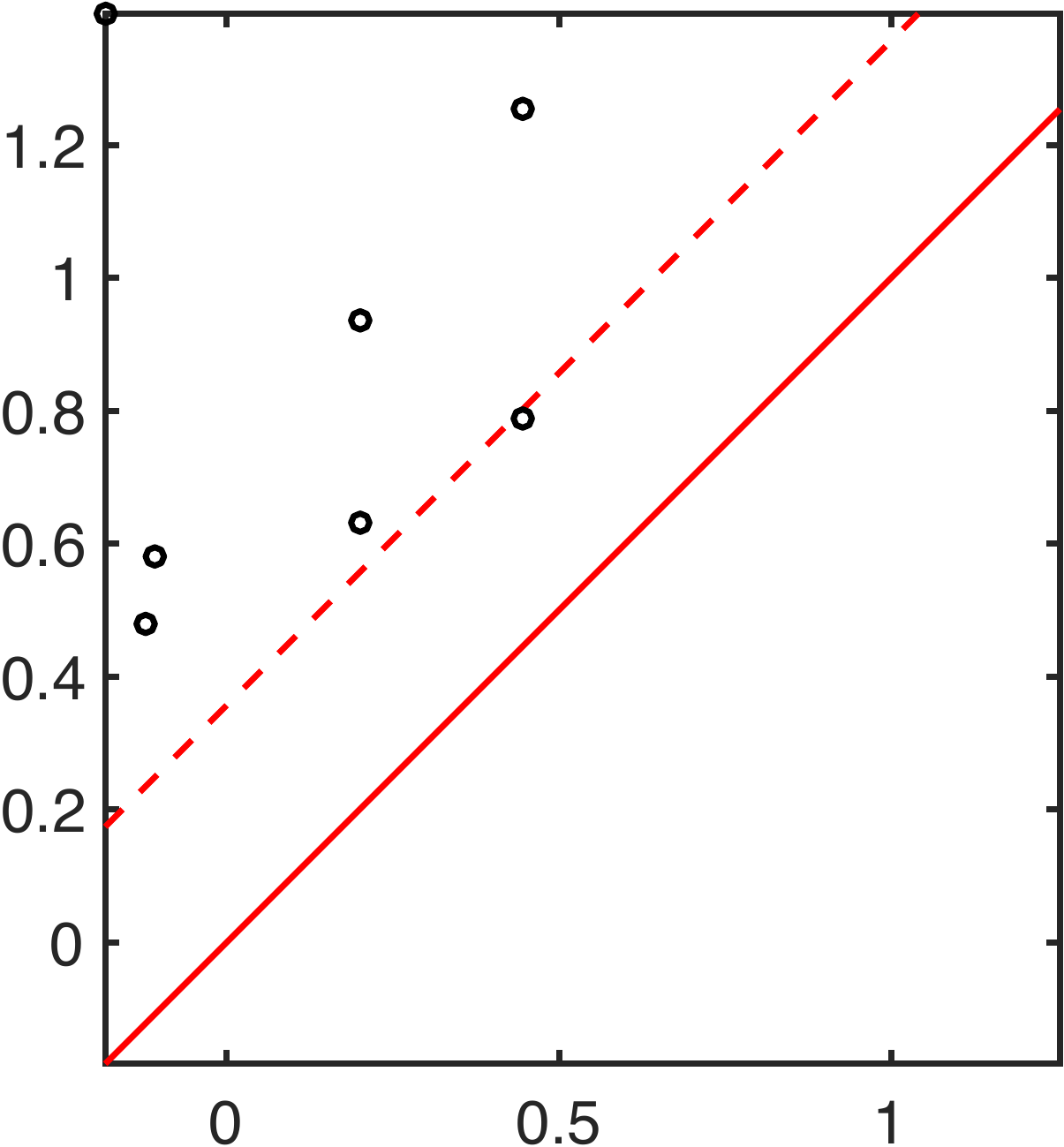} \\[4ex]
  \includegraphics[height=6cm]{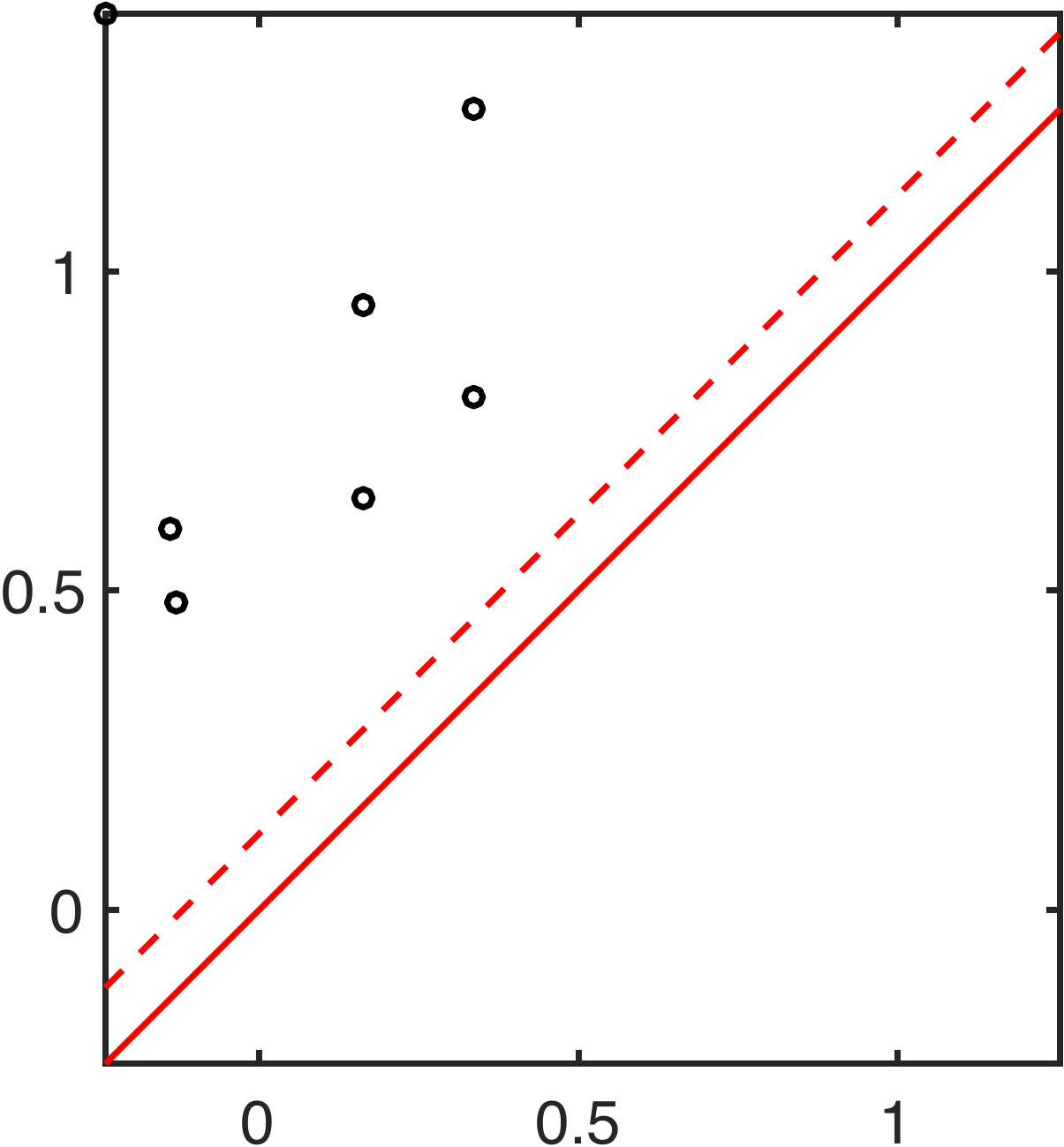}
  \hspace{0.5cm}
  \includegraphics[height=6cm]{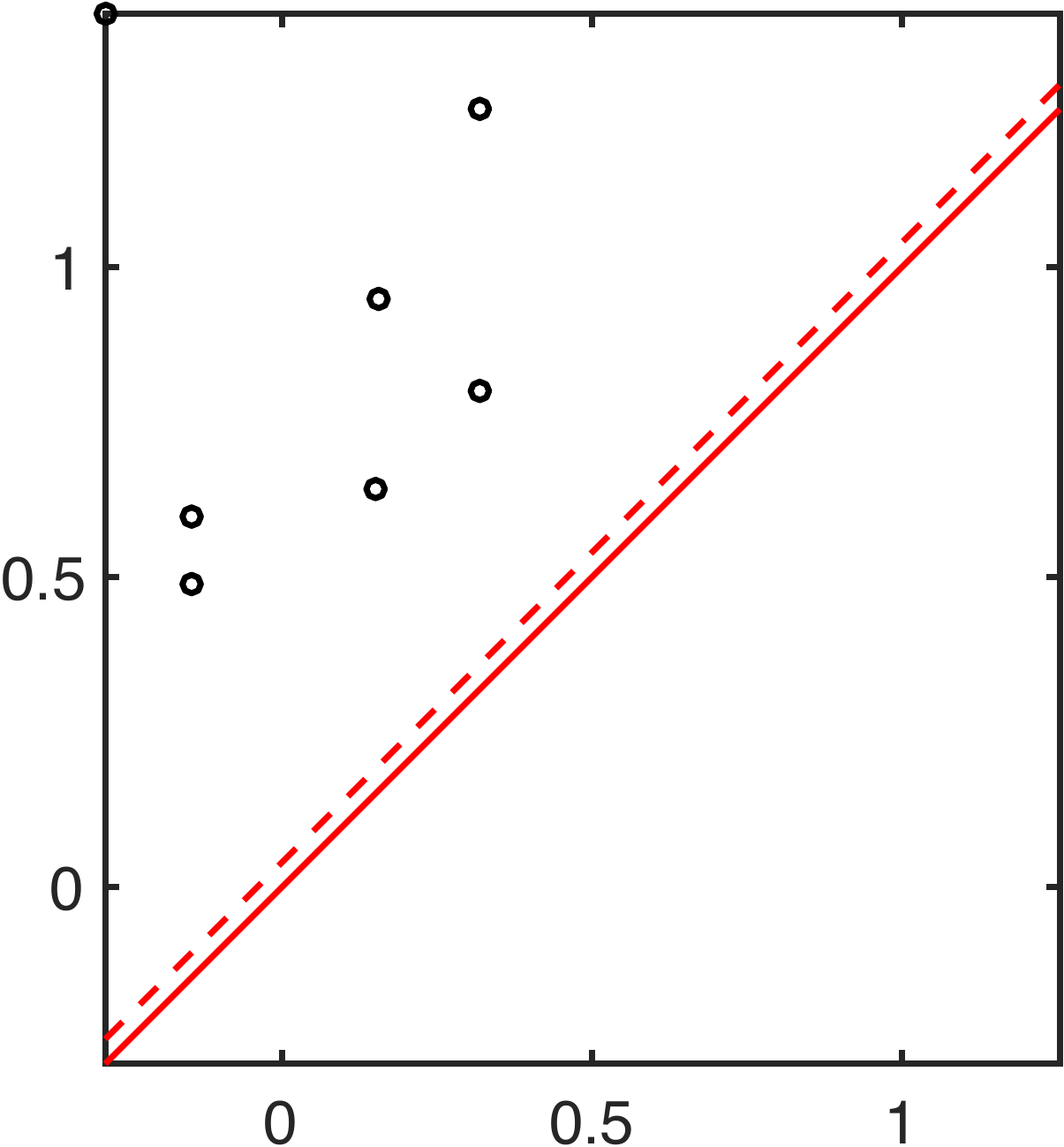}
  \caption{Sample persistence diagrams in dimension zero for piecewise
           constant approximations of the non-differentiable function~$f(x) =
           |\sin(6 \pi x)| / (1 + x^2) + 3 \cos(2 \pi x) / 10$. The
           function~$f$, together with its approximation~$\Box f$
           for~$\epsilon = 0.18$, is shown in the upper left image,
           while the upper right image contains the persistence
           diagram for~$\Box f$. The two lower panels show the
           persistence diagrams for~$\Box f$ if we use instead
           $\epsilon = 0.06$ (left) or $\epsilon = 0.02$ (right).}
  \label{fig:cornerex}
\end{figure}

The collection of all persistent homology groups is usually referred
to as the \emph{persistence module}. Despite its abstract nature, it can
be completely encoded via the notions of birth and death of homology
classes. For every homology class~$\gamma$, we record its birth time 
and its death time as an ordered pair, where the death time is set
to~$\infty$ if the class~$\gamma$ is nontrivial in~$\mathcal{Q}_n$.
In this way, a multiset of pairs of numbers~$\{(a_i,b_i)\}_{i \in I}$
is created, where $-\infty < a_i \leq b_i \le \infty$. This multiset
of points can naturally be drawn in the first quadrant of the Euclidean
plane, and in fact it lies above the line $b = a$. Such a representation
of the persistence module is called a \emph{persistence diagram}. If 
the persistence module is for a filtered rectangular
CW-complex~$X = |\mathcal{Q}|$ given by~$(\mathcal{Q},f)$, then we
denote the persistence diagram by~$D(f,X)$, or shorter by~$D(f)$ if the 
underlying space~$X$ is clear from context. Examples of persistence
diagrams can be found in the Figure~\ref{fig:cornerex}. In the top
left image, a continuous function~$f$ is shown together with its 
piecewise constant approximation~$\Box f$. Now let~$\mathcal{Q}^{(0)}$
denote the endpoints of the interval~$[0,1]$, together with the
collection of all points of discontinuities of~$\Box f$. If we denote
the points in the set~$\mathcal{Q}^{(0)}$ by $0 = x_0 < \ldots < x_N = 1$
and let $\mathcal{Q}^{(1)} = \{ [x_{k-1}, x_k] : k = 1,\ldots,N \}$,
then $\mathcal{Q} = \mathcal{Q}^{(0)} \cup \mathcal{Q}^{(1)}$ is a
rectangular structure, which renders the interval~$X = [0,1]$ a
filtered rectangular CW-complex given by~$(\mathcal{Q},\Box f)$.
The associated persistence diagram is shown in the upper right
panel. The two panels below show persistence diagrams for two
more approximations of~$f$, which will be discussed more later.
\subsection{Stability of Persistent Homology}
\label{sec:PersistentHomology3}
One of the prominent features of persistent homology is the fact that
the collection of persistence diagrams can be turned into a metric 
space in various ways, and we will present two of those below. In 
order to set the stage, we have to slightly modify the notion of 
persistence diagram. So far, a persistence diagram is a finite
multiset of points~$(a,b)$ with $a < b$, which represent birth/death-time
pairs for homology generators. For reasons that will become clearer
momentarily, we now add infinitely many of copies of all points~$(a,a)$
for $a \in \mathbb{R}$ to every persistence diagram, i.e., in addition
to the birth/death pairs every persistence diagram contains infinitely
many copies of the diagonal $b = a$. This modification ensures that any
two persistence diagrams~$A$ and~$B$ have the same infinite cardinality,
and there always are bijections $\eta : A \rightarrow B$. Notice that
without the addition of the diagonal, such bijections would generally
not exist. Now we can define the two central notions of distance
between~$A$ and~$B$.
\begin{itemize}
\item The \emph{bottleneck} distance between two persistence
diagrams~$A$ and~$B$ is defined as
\begin{displaymath}
  W_{\infty}(A,B) =
  \inf\left\{ \sup_{p \in A} \norm{p-\eta(p)}_{\infty} \; : \;
    \eta : A \rightarrow B \mbox{ is a bijection } \right\} \; .
\end{displaymath}
\item The \emph{degree-$q$ Wasserstein distance} between two persistence
diagrams~$A$ and~$B$ is defined as
\begin{displaymath}
  W_q(A,B) =
  \inf \left\{ \left( \sum_{p \in A} \norm{p-\eta(p)}_{\infty}^q
    \right)^{1/q} \; : \;
    \eta : A \rightarrow B \mbox{ is a bijection } \right\} \; .
\end{displaymath}
\end{itemize}
\begin{figure}
  \centering
  \includegraphics[width=4cm]{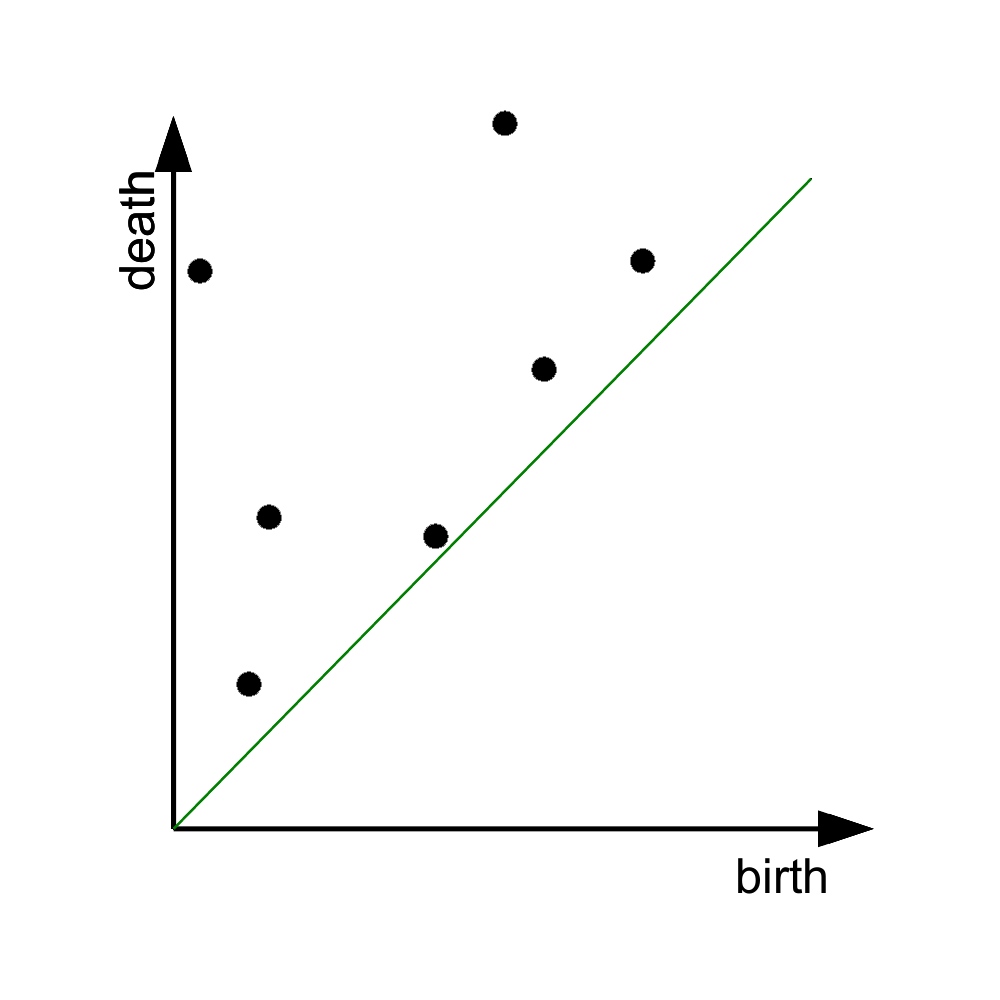}%
  \hspace{1.0cm}
  \includegraphics[width=4cm]{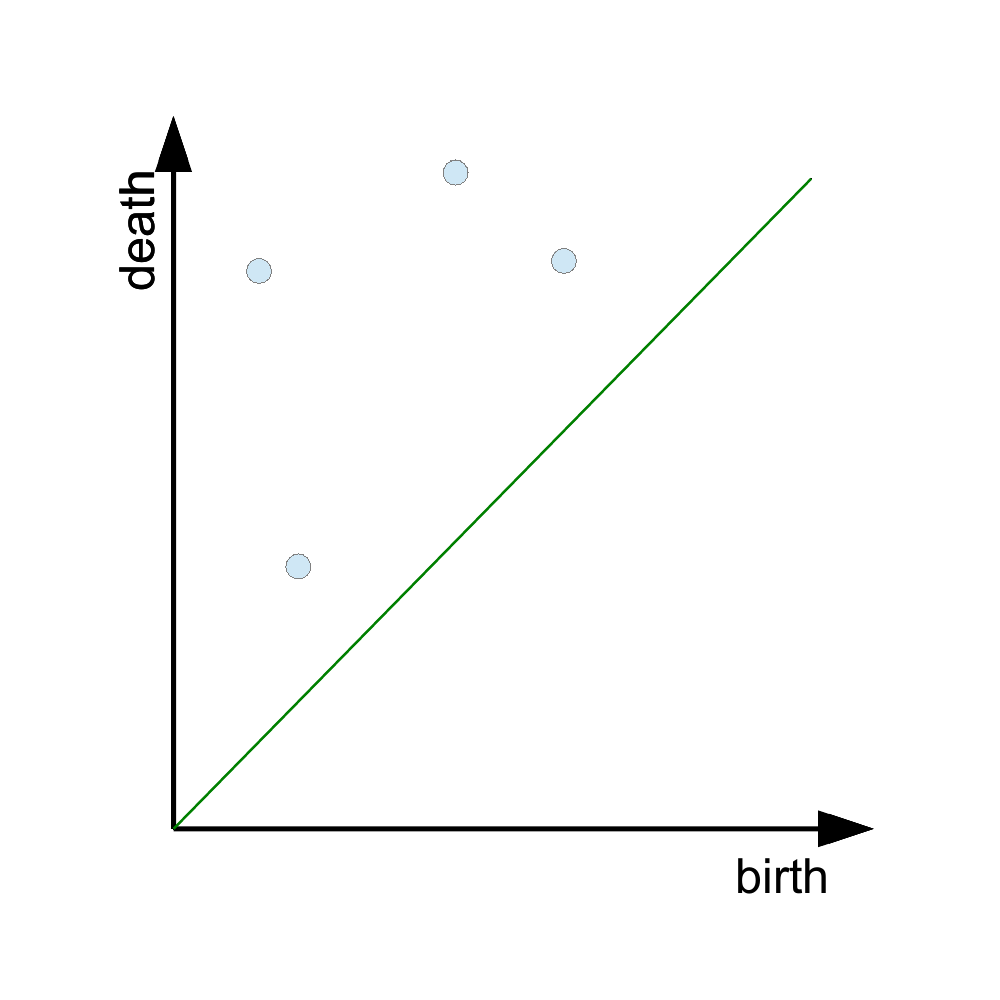}%
  \hspace{2.0cm}
  \includegraphics[width=4cm]{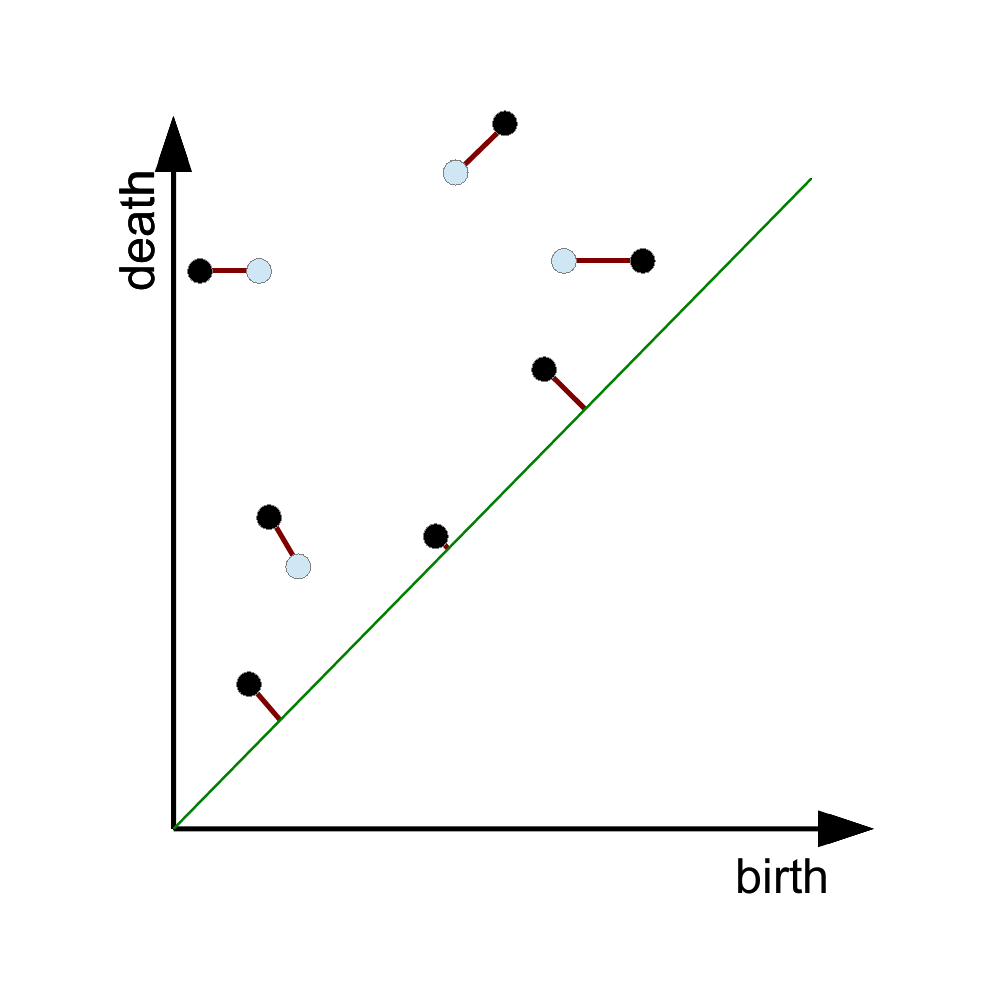}%
  \caption{Sample persistence diagram matching. The left two images show
           persistence diagrams with seven and four birth/death pairs,
           respectively. The third diagram shows a sample transport
           plan~$\eta$ between the two persistence diagrams.}
  \label{fig:bottelneckDistance}
\end{figure}
In other words, both distances try to identify a \emph{transport plan}
$\eta : A \rightarrow B$ which transforms the persistence diagram~$A$
into the persistence diagram~$B$, with in some sense minimal movement
of the points in the diagrams. While the bottleneck distance only considers
the largest displacement, the Wasserstein distance takes an average over all
displacements. See also Figure~\ref{fig:bottelneckDistance}.

Now that we are able to measure changes in persistence diagrams, one can
try to assess how changes in a filtration function~$f$ affect the associated
persistence diagrams. In fact, it is possible to establish a wide-ranging
stability property for persistent homology in this context, which is one
of the central reasons for its usefulness in the applied sciences.
Consider a function $f : X \rightarrow \mathbb{R}$ which is defined on a
triangulable topological space~$X$. One can define persistent homology
for the infinite filtration created by sublevel sets of~$f$, as long as 
the function~$f$ satisfies an additional property. To describe this in
slightly more detail, let~$X_a = f^{-1}(-\infty,a]$ for all $a \in \mathbb{R}$.
Then we clearly have the inclusion $X_a \subset X_b$ whenever $a \le b$.
As before, one would like to define the persistent homology groups as images
of the maps on homology induced by the inclusions $X_a \hookrightarrow X_b$.
Since we are faced with a filtration with infinite index set, certain
finiteness assumptions have to be made. For this, one calls a number
$a \in \mathbb{R}$ a \emph{homological critical value}, if there is
no value $\epsilon > 0$ such that the inclusion $X_{a-\epsilon}
\hookrightarrow X_{a+\epsilon}$ induces only isomorphisms on the homology
level. In other words, at a homological critical value the homology
groups change. We then call the function~$f$ \emph{tame}, if it only
has finitely many homological critical values, and if all homology
groups of all sublevel sets~$X_a$ have finite rank. For tame functions,
it is possible to define persistence diagrams similar to the procedure
described above, see for example~\cite[pp.~182f]{herbert}. Moreover,
the following stability result was established by Cohen-Steiner,
Edelsbrunner, and Harer~\cite{cohensteiner:etal:07a}.
\begin{theorem} \label{th:stabilityTheoremOrig}
Let~$X$ be a triangulable space, and let $f,g : X \rightarrow \mathbb{R}$
denote two continuous tame functions. Then the persistence diagrams~$D(f)$
and~$D(g)$ of~$f$ and~$g$, respectively, satisfy
\begin{displaymath}
  W_{\infty}(D(f),D(g)) \leq \norm{f-g}_{\infty} \; .
\end{displaymath}
\end{theorem}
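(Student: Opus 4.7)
The plan is to reduce the theorem to a purely algebraic statement about interleaved persistence modules. Set $\epsilon = \|f-g\|_\infty$, and write $X^f_a = f^{-1}(-\infty,a]$ together with the analogous $X^g_a$. Because $|f(x)-g(x)| \le \epsilon$ for every $x \in X$, one obtains the chain of inclusions
\begin{displaymath}
  X^f_a \; \subset \; X^g_{a+\epsilon} \; \subset \; X^f_{a+2\epsilon}
\end{displaymath}
for every $a \in \mathbb{R}$, together with the symmetric chain obtained by swapping $f$ and $g$. Passing to $H_k$ produces homomorphisms which commute with the maps induced by the inclusions $X^f_a \hookrightarrow X^f_b$ and $X^g_a \hookrightarrow X^g_b$ whenever $a \le b$, so the two persistence modules are $\epsilon$-interleaved. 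Tameness guarantees that all ranks are finite and that each diagram has only finitely many off-diagonal points. This is the only step where the hypothesis $\|f-g\|_\infty \le \epsilon$ actually enters.

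Next I would extract, from the interleaving alone, that the bottleneck distance between $D(f)$ and $D(g)$ is at most $\epsilon$. The classical tool is the \emph{Box Lemma} of Cohen-Steiner, Edelsbrunner, and Harer: for any closed axis-aligned rectangle $R = [a_1,a_2] \times [b_1,b_2]$ sitting above the diagonal at $L^\infty$-distance strictly greater than $\epsilon$, the number of points of $D(f)$ contained in $R$ equals the number of points of $D(g)$ contained in its $\epsilon$-enlargement $R^\epsilon = [a_1-\epsilon,a_2+\epsilon] \times [b_1-\epsilon,b_2+\epsilon]$. Both multiplicities are expressible through inclusion-exclusion as alternating sums of ranks of the persistence maps $H_k(X^f_{a_i}) \to H_k(X^f_{b_j})$ and $H_k(X^g_{a_i \mp \epsilon}) \to H_k(X^g_{b_j \pm \epsilon})$, and the interleaving identifies these ranks.

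With the Box Lemma in hand, a bijection $\eta : D(f) \to D(g)$ with $\sup_p \|p-\eta(p)\|_\infty \le \epsilon$ is assembled by a Hall-type marriage argument. Every point of $D(f)$ at $L^\infty$-distance strictly more than $\epsilon$ from the diagonal must, by applying the Box Lemma to shrinking rectangles around it, be matched injectively with a point of $D(g)$ in its $\epsilon$-neighborhood, and symmetrically for $D(g)$. The remaining off-diagonal points of either diagram lie within $L^\infty$-distance $\epsilon$ of the diagonal, and are paired with their orthogonal projections onto it at cost at most $\epsilon$; the infinitely many diagonal copies that by convention belong to each persistence diagram absorb all such pairings. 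This directly yields $W_\infty(D(f),D(g)) \le \epsilon$.

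The hardest step is the Box Lemma, which is where the algebraic content lives. I would first establish it for piecewise linear $f$ and $g$ on a fixed simplicial triangulation of $X$, where sublevel sets change only at finitely many critical values corresponding to individual simplex insertions and the rank equalities reduce to a direct comparison of the reduced boundary matrices of the two filtrations, matched through the interleaving maps. The tame continuous case is then obtained by approximating $f$ and $g$ uniformly by piecewise linear functions on successively finer triangulations and passing to the limit; tameness ensures that only finitely many off-diagonal points appear and that their birth/death coordinates depend continuously on the approximation, so the bottleneck bound is preserved. An alternative route tracks the linear homotopy $f_t = (1-t)f + tg$ and shows that the diagram varies at Lipschitz rate $\|f-g\|_\infty$ in $t$, integrating the bound over $t \in [0,1]$; but this vineyard approach relies on essentially the same local rank computation and so does not circumvent the core difficulty.
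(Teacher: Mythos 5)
The paper does not contain a proof of Theorem~\ref{th:stabilityTheoremOrig}. It is quoted as the classical stability theorem and attributed directly to Cohen-Steiner, Edelsbrunner, and Harer~\cite{cohensteiner:etal:07a}, so there is no in-paper argument to compare against. Your sketch is essentially a reconstruction of the proof from that cited reference: derive the $\|f-g\|_\infty$-interleaving of sublevel filtrations, use rank-based multiplicity formulas (the $k$-triangle and box lemmas) to compare point counts in boxes, assemble a matching by a Hall-type argument with the diagonal absorbing the short bars, prove the key lemma first in the piecewise-linear setting, and pass to general tame continuous functions by uniform approximation. That is the right architecture.

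One substantive error in the sketch is the statement of the Box Lemma. As you write it--the number of points of $D(f)$ in $R$ \emph{equals} the number of points of $D(g)$ in the enlargement $R^\epsilon$--it is false. The Box Lemma gives a one-sided inequality: the count of $D(f)$ in $R$ is at most the count of $D(g)$ in $R^\epsilon$ (equivalently, the $\epsilon$-shrunk box for $f$ is bounded by the full box for $g$), together with the symmetric bound with $f$ and $g$ swapped. Equality can genuinely fail, since $R^\epsilon$ strictly contains $R$ and may pick up extra points of $D(g)$ that correspond to near-diagonal features of $f$ lying outside $R$, or to points of $D(f)$ matched to points just outside $R$. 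The marriage argument you invoke only needs the two-sided inequality, so the overall proof still goes through once the lemma is stated correctly, but as written the key lemma you rely on is not true.
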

While the above theorem was a milestone for the applicability of 
persistent homology, it did suffer from a serious drawback. In its
original formulation, the functions~$f$ and~$g$ have to be at least
continuous. This of course would preclude our idea of approximating
a continuous functions via a piecewise constant one. Fortunately,
this continuity restriction has been removed in the work by
Chazal et al.~\cite{chazal:etal:09a, chazal:etal:12a}, which is
based on interleavings of persistence modules. As a consequence, 
we have the following result.
\begin{theorem} \label{th:stabilityTheorem}
Let~$X$ be a rectangular CW-complex, and let $f,g : X \rightarrow \mathbb{R}$
denote two functions, each of which is either continuous and tame, or piecewise
constant on the rectangles of the rectangular structure associated with~$X$
and lower semi-continuous. Then the persistence diagrams~$D(f)$ and~$D(g)$
of~$f$ and~$g$, respectively, satisfy
\begin{displaymath}
  W_{\infty}(D(f),D(g)) \leq \norm{f-g}_{\infty} \; .
\end{displaymath}
In other words, small perturbations in the function~$f$ only lead to
small perturbations of the persistence diagram~$D(f)$ with respect to
the bottleneck distance.
\end{theorem}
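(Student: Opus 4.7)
My plan is to prove the theorem by reducing it to the algebraic stability framework for persistence modules developed by Chazal, de~Silva, Glisse and Oudot, rather than trying to mimic the original proof of Theorem~\ref{th:stabilityTheoremOrig} which relies crucially on continuity. The central point is that the Cohen-Steiner--Edelsbrunner--Harer bound can be re-derived from the existence of an $\epsilon$-interleaving between persistence modules with $\epsilon = \|f-g\|_\infty$, and interleavings do not care whether the underlying functions are continuous; they only require that sublevel sets give rise to a well-defined, sufficiently finite persistence module.

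The first step is therefore to verify that each admissible $h \in \{f,g\}$ yields such a module from its sublevel sets $X^h_a = h^{-1}(-\infty,a]$. In the continuous tame case this is part of the hypothesis. In the lower semi-continuous piecewise constant case, I would use the fact that lower semi-continuity is equivalent to closedness of every sublevel set, so $X^h_a$ is a closed subset of $X$; and since~$h$ only takes finitely many values (one per rectangle of the finite rectangular structure associated with~$X$), the filtration $\{X^h_a\}_{a\in\mathbb{R}}$ has only finitely many jump points, at each of which $X^h_a$ is a subcomplex of~$X$ and thus has finite-rank homology. Either way the resulting persistence module is q-tame and the diagram $D(h)$ is defined in the sense of~\cite{chazal:etal:09a, chazal:etal:12a}.

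The second step is to construct the interleaving. Set $\epsilon = \|f-g\|_\infty$ and observe that for every $x \in X$ one has $f(x) \le g(x) + \epsilon$ and $g(x) \le f(x) + \epsilon$. Consequently
\begin{displaymath}
  X^f_a \subset X^g_{a+\epsilon} \subset X^f_{a+2\epsilon}
  \quad\mbox{ and }\quad
  X^g_a \subset X^f_{a+\epsilon} \subset X^g_{a+2\epsilon}
  \quad\mbox{ for every } a \in \mathbb{R} \; ,
\end{displaymath}
and these inclusions commute with the inclusions internal to each filtration. Passing to homology gives natural transformations between the two persistence modules whose composites are the structure maps shifted by $2\epsilon$, i.e.\ an $\epsilon$-interleaving. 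Applying the algebraic stability theorem of Chazal et al.\ to this interleaving produces the bound $W_\infty(D(f),D(g)) \le \epsilon = \|f-g\|_\infty$, which is exactly the assertion of the theorem.

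The only genuinely delicate point, and the one I would spend the most care on, is the mixed case in which $f$ is continuous and tame while $g$ is lower semi-continuous and piecewise constant (or vice versa). Here one must check that the inclusion $X^f_a \subset X^g_{a+\epsilon}$ really makes sense: if $f(x) \le a$, then $g(x) \le f(x) + \epsilon \le a + \epsilon$, so~$x \in X^g_{a+\epsilon}$, and no continuity of~$g$ is needed for this pointwise argument. The tameness of~$f$ on one side and the finiteness of the value set of~$g$ on the other together guarantee that the interleaving diagram lives in the q-tame category where algebraic stability applies. Once this compatibility is secured, the rest of the argument is a direct invocation of the machinery of~\cite{chazal:etal:09a, chazal:etal:12a}.
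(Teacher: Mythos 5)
Your proof is correct and takes the same route the paper indicates: the paper gives no proof of its own for this theorem, stating it as a consequence of the interleaving-based algebraic stability theory of Chazal et al.\ (\cite{chazal:etal:09a, chazal:etal:12a}), and your argument supplies precisely the reduction to that framework, namely the $\|f-g\|_\infty$-interleaving of sublevel-set filtrations together with the q-tameness check in each of the two admissible cases (continuous tame, versus lower semi-continuous piecewise constant on a finite rectangular structure). This is a faithful unpacking of the paper's citation rather than a genuinely different approach.
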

One of the crucial properties of persistent homology is its inherent
computability and a number of software packages have been developed
for this, for example {\tt dionysus}~\cite{dionysus},
{\tt PHAT}~\cite{phat}, {\tt Perseus}~\cite{perseus} and {\tt Gudhi}~\cite{gudhi}. While two
of these are based on standard matrix reductions~\cite{phat, dionysus},
one is based on discrete Morse theory~\cite{perseus}, see
also~\cite{MichMorse}. All of the packages are designed to compute
the persistence of a finite cell complex and are therefore not
capable of solving the problem of computing persistence of a
continuous function of a compact set to within a pre-specified
error bound. This gap in the literature is filled by the present
paper.

Persistent homology can also be defined for vector-valued filtering
functions $f : X \rightarrow \mathbb{R}^n$, where~$X$ is a rectangular
CW-complex or a triangulable space, and this leads to the concept of
\emph{multi-dimensional persistence}. However, for multi-dimensional
persistence one can show that there is no finite and complete
invariant~\cite{gunnar} which is similar to the persistence intervals
in the one-dimensional case $f : X \rightarrow \mathbb{R}$. Needless
to say, this implies that there is no hope for an algorithm which 
computes multi-dimensional persistence. To address this issue, a number
of research groups have tried to find some descriptors of multi-dimensional
persistence, which --- though incomplete -- can still prove to be useful
in practice and which are computable. It seems likely that in order to
compute such invariants it is necessary to construct a finite cell complex.
In fact, our algorithms presented below will work for any continuous
vector-valued function which is defined on a rectangular domain, and they
produce a piecewise constant lower semi-continuous approximation of the
function. Given such an approximation, one can use various stability
results for multi-dimensional persistence to show that the
multi-dimensional persistence information of the approximation
is close to the multi-dimensional persistence of the given continuous
function.

To close this section, we present one such stability result
for multi-dimensional persistence which is taken from the
paper~\cite{stabilityPersistentBettiNumbers}. Their construction
is based on finding a matching distance between rank invariants
of multi-dimensional persistent homology. For this, the following
concepts are introduced in~\cite{stabilityPersistentBettiNumbers}.
Let $u, v \in \mathbb{R}^n$ be given, and write $u = (u_1,\ldots,u_n)$
and $v = (v_1,\ldots,v_n)$. We say that $u \preceq v$ if for every
$i \in \{1,\ldots,n\}$ we have $u_i \leq v_i$. Define
the set $\delta^{+} = \{ (u,v) \in \mathbb{R}^n \times \mathbb{R}^n
\; : \; u \preceq  v \}$, and consider the usual maximum
norm~$\| \cdot \|_\infty$ on~$\mathbb{R}^n$. For a fixed triangulable
space~$X$ and a function $f : X \rightarrow \mathbb{R}^n$, define the
sublevel set $X(f,u) = \{ x \in X \; : \; f(x) \preceq u \}$. Now
consider two vectors~$u \preceq v$, and let $\pi_k^{u,v} : H_k(X(f,u))
\rightarrow H_k(X(f,v))$ denote the homomorphism induced by inclusion.
Then the image of~$\pi_k^{u,v}$ is called the \emph{multi-dimensional
$k$-th persistent homology group} of~$(X,f)$ at~$(u,v)$, where the
authors of~\cite{stabilityPersistentBettiNumbers} use \emph{\v{C}ech
homology} over a field, see~\cite{massey} for more details.
Additionally, the \emph{$k$-th rank invariant} of a pair~$(X,f)$ over a
fixed coefficient field~$K$ is defined as the function $\rho(X,f,k) :
\delta^{+} \rightarrow \mathbb{N} \cup \{ \infty \}$ given by
$\rho(X,f,k)(u,v) = {\rm rank} \, \pi_k^{(u,v)}$. For the so-defined
rank invariant, the following stability theorem was established
in~\cite[Theorem~4.4]{stabilityPersistentBettiNumbers}.
\begin{theorem} \label{th:foliationPaper}
Let~$X$ be a triangulable space, let $f,g : X \rightarrow \mathbb{R}^n$
be continuous, and let $k \in \mathbb{Z}$ be arbitrary. Then  there
exists a distance function~$D_{match}$ on the collection of all possible
rank invariant functions such that
\begin{displaymath}
  D_{match}\left( \rho(X,f,k), \rho(X,g,k) \right) \; \le \;
  \max_{x \in X}\|f(x)-g(x)\|_{\infty} \; .
\end{displaymath}
\end{theorem}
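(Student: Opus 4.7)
The plan is to reduce the multi-dimensional stability statement to the one-dimensional stability theorem (Theorem~\ref{th:stabilityTheoremOrig}) via a foliation of the parameter space $\mathbb{R}^n$ by suitable one-parameter lines, and then to define $D_{match}$ as a supremum of weighted bottleneck distances of the resulting one-dimensional persistence diagrams along those lines.

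First I would introduce the foliation. For each unit vector $l = (l_1,\ldots,l_n)$ with all $l_i > 0$ and each $b \in \mathbb{R}^n$ orthogonal to a fixed reference direction (so that every line of positive slope in $\mathbb{R}^n$ is hit exactly once), consider the line $s \mapsto sl+b$. The central observation is that the multi-dimensional sublevel set $X(f,sl+b)$ equals the one-dimensional sublevel set $\{x \in X : F_{l,b}(x) \le s\}$, where I set
\begin{displaymath}
  F_{l,b}(x) \; = \; \max_{1 \le i \le n} \frac{f_i(x)-b_i}{l_i} \; .
\end{displaymath}
Thus the rank invariant $\rho(X,f,k)$ restricted to pairs $(u,v)$ on the line completely determines, and is determined by, the persistence diagram $D(F_{l,b})$ of the auxiliary scalar function $F_{l,b}$. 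If $f$ is continuous, so is $F_{l,b}$, and under the tameness hypotheses needed for persistence diagrams to exist (which are part of the setup of \cite{stabilityPersistentBettiNumbers}), $D(F_{l,b})$ is well-defined.

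Next I would carry out the key estimate. A direct calculation shows that $F_{l,b}$ is Lipschitz in $f$ in the following quantitative sense:
\begin{displaymath}
  \|F_{l,b} - G_{l,b}\|_\infty \; \le \;
  \frac{1}{\min_i l_i} \, \max_{x \in X} \|f(x)-g(x)\|_\infty \; ,
\end{displaymath}
where $G_{l,b}$ is defined analogously with $g$ in place of $f$. Applying the one-dimensional stability theorem from Cohen-Steiner, Edelsbrunner, and Harer (Theorem~\ref{th:stabilityTheoremOrig}) along each line yields $W_\infty(D(F_{l,b}),D(G_{l,b})) \le \frac{1}{\min_i l_i}\|f-g\|_\infty$. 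Rearranging, I would then define
\begin{displaymath}
  D_{match}\bigl(\rho(X,f,k),\rho(X,g,k)\bigr) \; = \;
  \sup_{(l,b)} \; \min_i l_i \cdot W_\infty\bigl(D(F_{l,b}),D(G_{l,b})\bigr) \; ,
\end{displaymath}
and observe that this is indeed a function only of the rank invariants, since $D(F_{l,b})$ is determined by $\rho(X,f,k)$ restricted to the line through $b$ in direction $l$. The desired bound by $\max_{x \in X}\|f(x)-g(x)\|_\infty$ is then immediate from the per-line estimate above.

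The hard part is the verification that $D_{match}$ is actually a distance on the space of rank invariants (in particular, that it separates points and satisfies the triangle inequality) rather than merely an extended pseudo-metric, and that the weighting by $\min_i l_i$ keeps the supremum finite. The finiteness is forced by exactly the stability estimate just derived, whereas symmetry and the triangle inequality are inherited pointwise from $W_\infty$ along each line. Point separation requires knowing that if two rank invariants agree along every positive-slope line then they agree everywhere, which holds because any pair $u \preceq v$ with $u \neq v$ lies on such a line; this is the technically most delicate step and is treated carefully in \cite{stabilityPersistentBettiNumbers}.
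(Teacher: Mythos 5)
The paper does not actually prove this statement; it is quoted verbatim as Theorem~4.4 of the cited reference (Cerri, di~Fabio, Ferri, Frosini, Landi) and used as a black box. So there is no ``paper's own proof'' to compare against, but your sketch does faithfully reconstruct the strategy of that reference: foliate $\Delta^+$ by admissible lines $s \mapsto sl+b$ with $l_i>0$, observe that $X(f,sl+b)=\{x: F_{l,b}(x)\le s\}$ with $F_{l,b}(x)=\max_i (f_i(x)-b_i)/l_i$, derive the per-line Lipschitz estimate $\|F_{l,b}-G_{l,b}\|_\infty \le (\min_i l_i)^{-1}\|f-g\|_\infty$, and define $D_{match}$ as the supremum over $(l,b)$ of $\min_i l_i$ times the one-dimensional matching distance. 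All of these steps are correct, and the normalizing weight $\min_i l_i$ is exactly the right choice to cancel the blow-up from the estimate and give the clean bound.

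The one genuine gap is the invocation of Theorem~\ref{th:stabilityTheoremOrig} along each line. That theorem, as stated in the paper, requires both functions to be \emph{tame}, but the scalar reduction $F_{l,b}$ of a merely continuous $f$ need not be tame, and the theorem you are trying to prove assumes only continuity. The reference handles this by working throughout with persistent Betti numbers and \v{C}ech homology, for which a one-dimensional stability result is proved \emph{without} a tameness hypothesis (roughly, one uses right-continuity of persistent Betti numbers in $(u,v)$ and a representation via finitely-generated approximations); simply citing the tame stability theorem as you do would leave the argument incomplete for general continuous $f$ and $g$. You flag the tameness issue but then wave it away as ``part of the setup,'' which is not quite accurate: the stated theorem does not assume tameness, so the proof must either dispense with it or reduce to it by a limiting argument. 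Beyond that, your claim that $D(F_{l,b})$ is a function of $\rho(X,f,k)$ restricted to the line, and the discussion of why $D_{match}$ is a distance and the supremum is finite, are on target and mirror what the reference actually does.
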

As before, Theorem~\ref{th:foliationPaper} provides a bound on the change
of the rank invariant of multi-dimensional persistent homology in terms of
the change of the underlying function~$f$ measured in the maximum norm. We
would like to point out that the above results is just one example of a number
of similar results. In the same paper~\cite{stabilityPersistentBettiNumbers},
the authors derive a similar stability result for \emph{persistent Betti
numbers}, and in~\cite{hausdofrrStability} the case of \emph{multi-dimensional
persistence spaces} was considered. In all of these examples, the right-hand
side of the estimate is of the form presented in Theorem~\ref{th:foliationPaper}.
It is therefore clear that bottleneck-type stability can be observed in
multi-dimensional persistence, and the discretization process presented
below may be useful for computations of some of the above invariants.
\section{Verified Cubical Approximation of Continuous Functions}
\label{sec:cubapprox}
In this section, we present the main algorithms for approximating
continuous functions by piecewise constant lower semi-continuous
functions. Our approach makes use of rigorous computations via 
interval arithmetic, and will allow for the approximation of
the persistent homology of a function with rigorous error bounds.
\subsection{Rigorous Numerics and Interval Arithmetic}
\label{sec:intervalArithmetic}
We begin by collecting a few definitions and results from interval
arithmetic, which are necessary for our approach. In what follows, we will
use interval arithmetic to obtain rigorous mathematical statements about
approximations of continuous functions. We would like to point out, however,
that alternative rigorous computer arithmetic methods could be used just as
well, see for example~\cite{affineArithmetic, generalizedIntervalArithmetic}.
For a comprehensive introduction to interval arithmetic we refer the reader
to~\cite{moore}.

Throughout this paper our computations work with a fixed set~$\mathcal{R}$
of real numbers which are representable in a computer, called the
\emph{representable numbers}~$\mathcal{R}$. In order to account for
rounding errors, rather than working with numbers we make use of intervals.
This leads to the concept of \emph{interval arithmetic}, which represents
every real number in~$\mathbb{R}$ as an interval with endpoints in~$\mathcal{R}$.
More precisely, if~$x \in \mathbb{R}$ is a representable number, it is
represented by the degenerate interval~$[x,x]$, while any number $x \in
\mathbb{R} \setminus \mathcal{R}$ is represented as~$[\underline{x},
\overline{x}]$ with~$\underline{x}$ being the largest representable
number which is smaller than~$x$, and similarly for~$\overline{x}$.

In order to work with functions, one has to define arithmetic operations
on intervals. This has to be done in such a way that any potential rounding
errors are taken into account. To describe this in more detail,
let~$\mathcal{I}(\mathcal{R})$ denote the set of all intervals with both
endpoints in the set of representable numbers. Given two intervals
$a, b \in \mathcal{I}(\mathcal{R})$ with $a = [a_1,a_2]$ and $b = [b_1,b_2]$,
and denoting any of the four basic arithmetic operations by~$\diamond$,
the analogous interval operation $a \diamond b$ has to be defined in such
a way that
\begin{displaymath}
  \left\{ x \diamond y \; : \; x,y \in \mathbb{R} \text{ and }
    x \in a, \; y \in b \right\}
  \; \subset \;
  a \diamond b
  \; \in \;
  \mathcal{I}(\mathcal{R})
  \; .
\end{displaymath}
In practice, this is achieved in computer implementations of interval
arithmetic by outward rounding to the next representable numbers. In this
way, the resulting interval~$a \diamond b$ always contains all possible
outcomes of the actual operation. In addition to the four basic arithmetic
operations, implementations of interval arithmetic also have to provide 
interval versions of the standard elementary functions. This is usually
accomplished via Taylor series expansions, where the resulting interval
answer will also include any potential truncation errors. For a more
comprehensive treatment of these issues we refer the read to~\cite{neumaier}.
Common to all practical computer implementations of interval arithmetic is
that they provide a collection of predefined continuous functions, which
usually at least contains functions in $\Phi = \{ {\rm abs}, \;
{\rm sqrt}, \; {\rm sqr}, \; {\rm exp}, \; {\rm ln}, \; {\rm sin}, \;
{\rm cos}, \; {\rm arctan} \}$.

For bounds on the complexity of our algorithms, we need to assume later
on that the functions of interest are at least Lipschitz continuous.
Recall that a function $f : D \subset \mathbb{R}^n \rightarrow \mathbb{R}^m$
is called \emph{Lipschitz continuous} on the domain $D_0 \subset D$ if there
exists a constant $L \ge 0$ such that
\begin{displaymath}
  \|f(x_1) - f(x_2)\|_{\infty} \leq L \|x_1 - x_2\|_{\infty}
  \quad\mbox{ for all }\quad
  x_1,x_2 \in D_0 \; .
\end{displaymath}
Clearly, every elementary function from the set~$\Phi$ is Lipschitz
continuous, at least on compact rectangles over which the function is
differentiable. In fact, the same statement is true for any
\emph{arithmetical expression} which is formed using functions
from~$\Phi$, the four basic arithmetic operations, and composition
of functions. For a more formal definition,
see~\cite[Section~1.4]{neumaier}.

Despite the fact that arithmetical expressions usually lead to Lipschitz
continuous functions, this does not immediately carry over to our machine
computations. In order for our results below to hold, we need to make sure
that if~$f$ is a Lipschitz continuous function, then its actual interval
implementation~$\hat{f}$ as a function from a subset of~$\mathcal{R}^n$
into~$\mathcal{R}^m$ is Lipschitz continuous as well. This is the subject
of the following result from~\cite[Theorem~2.1.1]{neumaier}.
\begin{theorem}[Arithmetical Expressions Inherit Lipschitz Continuity]
\label{th:neumaier}
Let~$f$ denote an arithmetical expression in~$n$ variables, which takes
$m$-dimensional values, i.e., suppose $f : D \subset \mathbb{R}^n
\rightarrow \mathbb{R}^m$, and suppose that~$f$ is Lipschitz continuous
on~$D_0 \subset D$ with constant~$L$. Furthermore, let~$\hat{f}$ denote
its interval implementation as described above, and let~$\mathcal{D}_0$
denote the interval vectors in~$\mathcal{I}(\mathcal{R})^n$ which
represent the points in~$D_0$. Then~$\hat{f}$ is Lipschitz continuous
on~$\mathcal{D}_0$. More precisely, we have
\begin{displaymath}
  \left\| \hat{f}(x_1) - \hat{f}(x_2) \right\|_{\infty} \leq
    \alpha(L) \left\| x_1 - x_2 \right\|_{\infty}
  \quad\mbox{ for all }\quad
  x_1, x_2 \in \mathcal{D}_0 \; ,
\end{displaymath}
where the constant~$\alpha(L)$ depends on the arithmetical expression
for~$f$, and can be computed explicitly using~\cite[Table~2.1]{neumaier}.
\end{theorem}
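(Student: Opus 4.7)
The natural approach is structural induction on the arithmetical expression defining $f$. Recall from~\cite[Section~1.4]{neumaier} that an arithmetical expression is built recursively from constants, coordinate projections, the four basic arithmetic operations $\{+,-,\times,/\}$, the elementary functions in $\Phi$, and function composition. My plan is to establish the Lipschitz property for the interval implementation at each level of this recursive construction, tracking how the Lipschitz constant transforms, and then read off the resulting constant $\alpha(L)$ from the tabulated per-operation rules.

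For the base cases, a constant interval $[c,c]$ (or its outward-rounded enclosure $[\underline{c},\overline{c}]$) trivially has Lipschitz constant~$0$, and a coordinate projection is Lipschitz with constant~$1$, both under $\|\cdot\|_\infty$ on $\mathcal{I}(\mathcal{R})^n$ once we measure the distance between two interval vectors by the maximum over components of $|\underline{a}_1-\underline{a}_2|$ and $|\overline{a}_1-\overline{a}_2|$. For the inductive step on each basic operation and each $\varphi \in \Phi$, I would show two things: (i) the exact set-valued operation $a \diamond b \mapsto \{x \diamond y : x \in a,\, y \in b\}$ is Lipschitz as a map between interval spaces, with a constant depending on $L$ and on uniform bounds of the operands over the compact domain $D_0$ (for multiplication these bounds enter linearly, for division one additionally needs a positive lower bound on $|b|$, which on a compact $D_0$ where the denominator does not vanish is guaranteed); (ii) the outward-rounding step from the exact interval enclosure to its representable hull in $\mathcal{I}(\mathcal{R})$ adds at most one machine $\varepsilon$ per endpoint, and therefore distorts the Lipschitz bound by an additive constant times unit roundoff, which can be absorbed into a slightly enlarged multiplicative constant. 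Composition then multiplies Lipschitz constants in the standard way, while the unary elementary functions in $\Phi$ are each separately Lipschitz on any compact subrectangle on which they are differentiable (for $\mathrm{sqrt}$ and $\mathrm{ln}$ this again requires staying bounded away from the singular boundary, which on $D_0$ is given).

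The bookkeeping needed to combine these cases is exactly what Neumaier tabulates in~\cite[Table~2.1]{neumaier}: one records, for every node of the expression tree, the Lipschitz factor contributed by that node as a function of $L$ and of interval bounds on its subexpressions. Walking up the expression tree and composing these per-node factors yields an explicit constant $\alpha(L)$ depending only on the syntactic form of $f$ and on the enclosure $D_0$.

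The step I expect to be most delicate is reconciling the outward rounding with the inductive Lipschitz bound: one has to verify that after every operation the replacement of the ideal interval hull by its representable enclosure does not spoil the estimate, and that the accumulated rounding across a bounded-depth expression tree contributes only a bounded multiplicative overhead. Once this is in hand, the statement follows immediately by unrolling the induction to the root of the expression tree for $f$, giving the quantitative bound claimed. Since the detailed case analysis and the explicit constants are exactly what is carried out in~\cite[Theorem~2.1.1]{neumaier}, I would simply cite that reference for the full computation.
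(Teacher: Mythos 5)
The paper does not prove Theorem~\ref{th:neumaier} at all: it is quoted verbatim from~\cite[Theorem~2.1.1]{neumaier} and used as a black box, with~\cite[Table~2.1]{neumaier} supplying the constant~$\alpha(L)$. Your structural-induction sketch is a plausible reconstruction of what lies behind that citation, and since you too ultimately defer to Neumaier for the detailed case analysis, your treatment is consistent with the paper's.

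One remark on the step you rightly flag as most delicate: an additive roundoff term of the form $L\|x_1-x_2\|_\infty + O(u)$ cannot in general be absorbed into a multiplicative Lipschitz constant over a continuum, since it does not vanish as $x_1\to x_2$. What saves the argument is precisely the discreteness of~$\mathcal{D}_0$ — distinct interval vectors in~$\mathcal{I}(\mathcal{R})^n$ are separated by at least the local machine gap, so the additive rounding contribution is dominated by a constant multiple of~$\|x_1-x_2\|_\infty$ on that domain. This is the sort of floating-point bookkeeping that Neumaier's Table~2.1 encodes operation by operation, and it is worth making the discreteness explicit rather than merely asserting absorption; but since you cite~\cite[Theorem~2.1.1]{neumaier} for the full computation, this is a presentational point rather than a gap.
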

The above Theorem~\ref{th:neumaier} implies that interval evaluations
of a Lipschitz continuous function determine again a Lipschitz continuous
function. Of course, one would expect that the Lipschitz constants of the
new function~$\hat{f}$ would increase in size, as the implementation
deals with intervals. But as shown in~\cite[Table~2.1]{neumaier}, this
effect is minor.
\subsection{Piecewise Constant Approximations}
\label{sec:approximationContFunctions}
In this section we show how a continuous function $f : R \rightarrow
\mathbb{R}^m$, where $R \subset \mathbb{R}^n$ is a rectangular domain, 
can be approximated by a piecewise constant function. The approximation
will be constructed as a collection of compact rectangles whose union
is~$R$, where each of the rectangles in the decomposition is carrying
the following information:
\begin{itemize}
\item Each rectangle is described by an $n$-vector of pairs of
representable numbers $\{(b_i,e_i)\}_{i=1}^{n}$ which satisfy
the inequalities $b_i \leq e_i$, and which are the projections
of the rectangle onto the coordinate axes $x_1,\ldots,x_n$.
\item Each rectangle has an associated \emph{value}, which is an
$m$-dimensional vector of representable numbers. For a rectangle~$Q$
in the final decomposition of~$R$, its value is denoted by~$val(R)$. 
\end{itemize}
The algorithm which leads to the desired decomposition of~$R$ is
motivated by the classical simplicial approximation theorem. In its
original form, this theorem states that every continuous function
between triangulable spaces can be approximated by a simplicial map.
In order to obtain this simplicial map, one has to subdivide the
simplices, possibly many times. As far as we are aware, there is no criterion that allows to check if the approximation of a desired accuracy have been achieved. Saying so, the simplicial approximation theorem is non constructive. 

The constructive algorithm presented below uses
a similar idea. By employing rigorous arithmetic, for example interval
arithmetic as discussed in the previous section, we can ensure that
the final subdivision of the rectangular domain~$R$ is such that
the value~$val(Q)$ of every rectangle~$Q$ in the final subdivision
of~$R$ satisfies the estimate
\begin{displaymath}
  \left\| f(x) - val(Q) \right\|_\infty \; \leq \; \epsilon
  \quad\mbox{ for all }\quad
  x \in Q \; .
\end{displaymath}
To achieve this property, we repeatedly subdivide~$R$ until the above
estimate holds for every rectangle in the subdivision.
\begin{algorithm}[tb]
  \small
  \caption{Piecewise constant approximation of continuous functions.}
  \label{alg:partiallyConstantApproximationOfAFunction}
  \begin{algorithmic}
	\REQUIRE Compact rectangle $R \subset \mathbb{R}^n$,
	         continuous function $f : R \rightarrow \mathbb{R}^m$,
	         error bound $\epsilon > 0$.
    \ENSURE Rectangular partition of~$R$ which gives an $\epsilon$-approximation of~$f$ on~$R$.
    \STATE Queue $\mathcal{Q}$;
    \STATE $\mathcal{Q} \leftarrow R$;
    \WHILE {$\mathcal{Q}$ is not empty}
		\STATE Rectangle $Q \leftarrow pop(\mathcal{Q})$;
		\STATE $v = midpt(\hat{f}(Q))$;
		\IF { $(max(\hat{f}(Q)) - v \le \epsilon) \;\; {\rm and} \;\; (v - min(\hat{f}(Q)) \le \epsilon$) }
			\STATE $val(Q) = v$;
		\ELSE
			\STATE Subdivide $Q$ in all directions.
			       Put all resulting rectangles in~$\mathcal{Q}$;
		\ENDIF
    \ENDWHILE
    \RETURN 
  \end{algorithmic}
\end{algorithm}

To describe the algorithm in more detail, we need to introduce some
notation. Consider a rectangle $R \subset \mathbb{R}^n$. Then we say
that the rectangles $R_1,\ldots,R_k$ form a \emph{partition} of the
rectangle~$R$ if we have $R = R_1 \cup \ldots \cup R_k$, and if for
arbitrary $i,j \in \{1,\ldots,k\}$ the $n$-dimensional measure of the
intersection~$R_i \cap R_j$ is zero. The intersection $R_i \cap R_j$
can either be empty, or a rectangle of dimension smaller than~$n$.
A partition $R_1,\ldots,R_k$ is an \emph{$\epsilon$-approximation
of a function $f : R \subset \mathbb{R}^n \rightarrow \mathbb{R}^m$},
if we have
\begin{displaymath}
  \left\| f(x) - val(R_i) \right\|_\infty \; \leq \; \epsilon
  \quad\mbox{ for all }\quad
  x \in R_i
  \quad\mbox{ and }\quad
  i = 1,\ldots,k \; .
\end{displaymath}
With each $\epsilon$-approximation of~$f$, we can associate a
lower semi-continuous function~$\Box f$ which is defined via
$\Box f(x) = val(R_i)$ for all~$x$ in the interior of~$R_i$,
and for any point~$x$ which lies in the boundary of at least
one of the partition rectangles we define $\Box f(x) = \min\{
val(R_{i_1}), \ldots, val(R_{i_\ell}) \}$, where~$x$ is contained
in the rectangles $R_{i_1}, \ldots, R_{i_\ell}$. Finally, for a
rectangle $R = [a_1,b_1] \times \ldots \times [a_n,b_n]$ its
\emph{midpoint} is given by $midpt(R) = ((a_1+b_1)/2, \ldots,
(a_n+b_n)/2)$, and its geometric realization, i.e., the set of
points belonging to the rectangle, is denoted by~$|R|$. In terms
of the given function~$f$, we assume that it is of a form suitable
for evaluation over rectangles using the rigorous computer arithmetic.
With these definitions, the algorithm for determining a piecewise
constant approximation of a continuous function is given by
Algorithm~\ref{alg:partiallyConstantApproximationOfAFunction}.
In the algorithm, the rigorous arithmetic implementation of~$f$
is denoted by~$\hat{f}$.

The properties of rigorous computer arithmetic
discussed in the previous section readily imply that if
Algorithm~\ref{alg:partiallyConstantApproximationOfAFunction}
terminates, then the constructed partition is an
$\epsilon$-approximation of~$f$. Of course, there are
instances where the algorithm does not terminate. To see
this, suppose we are computing on a machine whose smallest
strictly positive representable number is $\rho_{\min} > 0$.
Choose $\delta > 0$ such that
\begin{equation} \label{deltachoice}
  \delta \left( \rho_{\min} + \delta \right) <
  \frac{\rho_{\min}}{2 \pi} \; ,
\end{equation}
and consider the function~$f : R \to \mathbb{R}$ defined
via
\begin{equation} \label{terminateex}
  f(x) = \sin\frac{1}{x + \delta}
  \qquad\mbox{ for all }\qquad
  x \in R = [0,1] \; .
\end{equation}
Any partition of~$[0,1]$ into compact intervals whose endpoints
are representable numbers has to contain an interval~$[0,a]$
with $\rho_{\min} \le a \le 1$. One can easily see
that~(\ref{deltachoice}) is equivalent to
\begin{displaymath}
  \frac{1}{0 + \delta} - \frac{1}{a + \delta}
  \; \ge \; 
  \frac{1}{0 + \delta} - \frac{1}{\rho_{\min} + \delta}
  \; > \;
  2 \pi \; ,
\end{displaymath}
and therefore the range of~$f$ over the interval~$[0,a]$ is
the interval~$[-1,1]$. This immediately implies that any
evaluation using rigorous computer arithmetic yields
$\hat{f}([0,a]) \supset [-1,1]$, and the algorithm
cannot terminate if we set $\epsilon < 1$.

The above example illustrates that even for simple analytic
real-valued functions on a compact one-dimensional domain
the presented algorithm may not terminate. Therefore, in order
to avoid infinite loops one has to set a maximal number of
subdivisions that the algorithm is allowed to perform. If this
number is exceeded, the algorithm terminate with the answer
``\emph{cannot decide},'' i.e., the algorithm is a \emph{partial
algorithm}. For clarity of presentation, this simple modification
is not part of the pseudo-code of
Algorithm~\ref{alg:partiallyConstantApproximationOfAFunction}.

Nevertheless, on a purely theoretical level, the algorithm
has the potential to terminate for every continuous function
$f : R \to \mathbb{R}^m$ defined on a compact rectangle
$R \subset \mathbb{R}^n$. This is a direct consequence of
the resulting uniform continuity of~$f$ on~$R$. In fact,
if we assume more regularity of~$f$, and that the computer
implementation has infinite precision, then it is even
possible to give an upper bound on the number of rectangles
in an $\epsilon$-approximation of~$f$. This is the subject
of the following result.
\begin{theorem}[Algorithm Complexity for Lipschitz Functions]
\label{thm:complexity}
Let $R \subset \mathbb{R}^n$ denote a compact rectangle,
and assume that the function $f : R \rightarrow \mathbb{R}^m$
is Lipschitz continuous with Lipschitz constant~$L$. Suppose
further that Algorithm~\ref{alg:partiallyConstantApproximationOfAFunction}
was successfully executed for an arithmetical expression of~$f$, for
the algorithm parameter $\epsilon > 0$, and that the algorithm was run
on a computer with infinite precision. Then the algorithm creates
a subdivision of the rectangle~$R$ with
\begin{displaymath}
  \mbox{at most }\quad
  \left( \frac{\alpha(L) diam(R)}{2\epsilon} \right)^n
  \quad\mbox{ rectangles,}
\end{displaymath}
where the constant~$\alpha(L)$ is taken from
Theorem~\ref{th:neumaier}, and $diam(R)$ denotes the diameter
of~$R$ with respect to the maximum norm.
\end{theorem}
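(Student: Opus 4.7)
The plan is to combine the Lipschitz continuity of the interval implementation~$\hat{f}$ supplied by Theorem~\ref{th:neumaier} with a counting argument for the dyadic subdivision tree generated by Algorithm~\ref{alg:partiallyConstantApproximationOfAFunction}. The key point is that Lipschitz continuity gives a lower bound on the size of any rectangle that still fails the acceptance test, which in turn bounds the depth of the subdivision tree and hence the total number of leaves.

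First I would rewrite the termination test in a more convenient form. With $v = \operatorname{midpt}(\hat{f}(Q))$, the conditions
\begin{displaymath}
  \max(\hat{f}(Q)) - v \le \epsilon
  \quad\text{and}\quad
  v - \min(\hat{f}(Q)) \le \epsilon
\end{displaymath}
are together equivalent to $\|\max(\hat{f}(Q)) - \min(\hat{f}(Q))\|_\infty \le 2\epsilon$, i.e., to the componentwise range of $\hat{f}$ over $Q$ being at most $2\epsilon$ in the sup-norm. By Theorem~\ref{th:neumaier},
\begin{displaymath}
  \left\| \hat{f}(x_1) - \hat{f}(x_2) \right\|_\infty
  \le \alpha(L)\,\|x_1 - x_2\|_\infty
  \quad\text{for all}\quad x_1,x_2\in Q,
\end{displaymath}
so the componentwise range of $\hat{f}$ on $Q$ is at most $\alpha(L)\operatorname{diam}(Q)$, where $\operatorname{diam}$ denotes the sup-norm diameter. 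Consequently, any rectangle $Q$ with $\operatorname{diam}(Q)\le 2\epsilon/\alpha(L)$ automatically passes the acceptance test and is not subdivided further.

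Next I would exploit the fact that the algorithm subdivides in all coordinate directions, so a rectangle at depth $k$ of the subdivision tree has every side equal to the corresponding side of $R$ divided by $2^k$, giving sup-norm diameter $\operatorname{diam}(R)/2^k$. By the previous step, no rectangle at depth $k$ with $\operatorname{diam}(R)/2^k \le 2\epsilon/\alpha(L)$ can be further subdivided, so the depth of the subdivision tree is bounded above by $K = \lceil \log_2(\alpha(L)\operatorname{diam}(R)/(2\epsilon))\rceil$. The total number of leaves then cannot exceed $2^{nK} = (2^K)^n$, which is at most the stated quantity $(\alpha(L)\operatorname{diam}(R)/(2\epsilon))^n$ when $\alpha(L)\operatorname{diam}(R)/(2\epsilon)$ is treated as the natural (not necessarily integer) side-count of the finest required partition.

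The main technical subtlety lies precisely in this last step: a strictly dyadic refinement must round the required subdivision level up to the next integer, so in the worst case $2^K$ can exceed $\alpha(L)\operatorname{diam}(R)/(2\epsilon)$ by nearly a factor of two, and the literal constant in the bound is recovered only by comparison to the idealized uniform partition of $R$ into sub-rectangles of side~$2\epsilon/\alpha(L)$. An equivalent way to phrase the counting, which makes the stated form of the constant transparent, is a volume comparison: every leaf $Q$ which is not the whole of~$R$ is obtained by bisecting its parent~$Q'$, whose failure of the termination test forces $\operatorname{diam}(Q') > 2\epsilon/\alpha(L)$ via Lipschitz continuity, so that the total number of leaves is controlled by $\operatorname{vol}(R)\le \operatorname{diam}(R)^n$ divided by the minimal admissible leaf volume~$(2\epsilon/\alpha(L))^n$, yielding the claimed bound.
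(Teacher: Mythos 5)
Your argument is essentially the paper's: use Theorem~\ref{th:neumaier} to conclude that~$\hat f$ has Lipschitz constant~$\alpha(L)$, deduce that any rectangle of sup-diameter at most $2\epsilon/\alpha(L)$ passes the acceptance test, and then count how many such rectangles fit in~$R$. The paper's own proof simply asserts that ``$R$ has to be subdivided $(\alpha(L)\,diam(R))/(2\epsilon)$ times in each coordinate direction,'' which tacitly replaces the actual dyadic bisection tree by an idealized uniform partition into pieces of side $2\epsilon/\alpha(L)$.

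The subtlety you flag is real, and it affects the paper's proof just as much as yours: with genuine bisection the required depth is $K=\lceil\log_2(\alpha(L)\,diam(R)/(2\epsilon))\rceil$, and $2^{nK}$ can exceed the stated bound by almost a factor of $2^n$. The authors themselves hedge in the paragraph following the theorem (``the number of rectangles \dots might be slightly larger''). However, be aware that your proposed volume-comparison fix does not actually recover the stated constant either: the failure of the parent~$Q'$ forces $diam(Q')>2\epsilon/\alpha(L)$, so a leaf~$Q$ obtained by bisecting~$Q'$ only satisfies $diam(Q)>\epsilon/\alpha(L)$, and the volume count then gives $\bigl(\alpha(L)\,diam(R)/\epsilon\bigr)^n = 2^n\bigl(\alpha(L)\,diam(R)/(2\epsilon)\bigr)^n$ --- the same $2^n$ overshoot. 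So your proof is at least as rigorous as the paper's and is more honest about where the exact constant comes from, but neither argument establishes the literal bound for the bisection-based algorithm as stated; a clean statement would either insert the factor~$2^n$, or phrase the bound in terms of the ceiling of~$\log_2$.
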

\begin{proof}
In infinite precision, the {\tt if}-condition in
Algorithm~\ref{alg:partiallyConstantApproximationOfAFunction}
is satisfied as soon as we have
\begin{displaymath}
  max(\hat{f}(R)) - min(\hat{f}(R)) \; \le \;
  2 \epsilon \; ,
\end{displaymath}
where~$\hat{f}$ denotes the interval implementation of~$f$. In other
words, the algorithm terminates on a rectangle $A \subset R$, as soon
as the evaluation of~$\hat{f}$ has diameter at most~$2\epsilon$.
Moreover, Theorem~\ref{th:neumaier} implies that the interval
representation has the Lipschitz constant~$\alpha(L)$. Thus,
for $diam(A) \leq \frac{2\epsilon}{\alpha(L)}$ the rigorous
evaluation of~$f$ on~$A$ has diameter smaller at most~$2\epsilon$.
In the worst case, this means that the rectangle~$R$ has to
be subdivided $(\alpha(L) diam(R)) / (2\epsilon)$ times in each
coordinate direction, and this will lead to at most
$((\alpha(L) diam(R)) / (2\epsilon))^n$ rectangles in the
final subdivision.
\end{proof}
The above result was formulated for infinite precision in order
to exclude examples such as~(\ref{terminateex}). In actual computer
implementations, the number of rectangles in the final subdivision
of the rectangle is usually very close to the bound in the theorem,
but it might be slightly larger. This indicates that the techniques
which will be presented below can be expected to work extremely well
in low dimensions. On average, however, we expect the complexity of
the procedures developed in this paper to grow exponentially with the
dimension of the ambient space, in accordance with the above theorem.
Of course, the main factors in the complexity are the topological
features of the considered function, and there are many functions for
which this exponential growth is not of practical importance. Yet, as
soon as a function is Lipschitz-continuous with optimal Lipschitz constant
$L \geq 1$, the computational complexity grows exponentially with the
dimension. We would like to point out that this is yet another aspect
of the so-called \emph{curse of dimensionality}. When constructing
cubical complexes of an $n$-dimensional box, we are in fact experiencing
the same obstacles as the ones encountered in the construction of
Delaunay triangulations of Voronoi diagrams of~$k$ points
in~$\mathbb{R}^n$, which requires a computation time of the
order~$O(k^{\floor{(n+1)/2}})$, see for example~\cite{Chazelle}.
\subsection{The Subdivision Algorithm}
\label{sec:SubdivAlgorithm}
The algorithm presented in Section~\ref{sec:approximationContFunctions}
operates entirely on top-dimensional cells. This is sufficient if one's
only interest is the computation of an $\epsilon$-approximation of a
continuous function. If, however, the goal of the computation is the
(multi-dimensional) persistent homology of the underlying continuous
function, then it is necessary to create a complex of some sort, together
with an associated filtration. In our situation, this complex will be
a rectangular CW-complex, and in addition to the top-dimensional cells
one needs to efficiently store also the necessary lower-dimensional
cells. The creation of this complex will be described in the current
section.

In the following, we assume that the initial rectangle~$R$ used in
Algorithm~\ref{alg:partiallyConstantApproximationOfAFunction} is given
as a rectangular CW-complex. More precisely, we assume that it is stored
in a structure which contains all the boundary elements, and such that
each rectangle in the structure can access both its boundary and
coboundary rectangles. For this type of input, we describe a
straightforward subdivision scheme which allows one to subdivide~$R$
and incorporate the resulting new rectangles, while keeping the structure
of the rectangular CW-complex updated after each subdivision. In 
pseudo-code notation, this procedure can be implemented as in
Algorithm~\ref{alg:subdividingRectangleAlgorithm}. This algorithm
subdivides a rectangle~$R$ in the direction of the $i$-th coordinate
into two sub-rectangles~$R_1$ and~$R_2$ which have the same dimension
as~$R$, and which share a common face~$R_3$. The algorithm uses references
in~$R$ to the boundary and coboundary elements of~$R$ to maintain the global
structure of the rectangular CW-complex. See also
Figure~\ref{fig:rectangleDivisionExample} for an illustration. While
the above algorithm only subdivides the rectangle~$R$ along one coordinate
direction, in some cases one might like to perform subdivisions in all
directions at once. In this case, one can use
Algorithm~\ref{alg:subdividingRectangleAllDirectionsAlgorithm}.
\begin{algorithm}[tb]
  \small
  \caption{Rectangle subdivision in one coordinate direction.}
  \label{alg:subdividingRectangleAlgorithm}
  \begin{algorithmic}
	\REQUIRE $R = [r_{1,0},r_{1,1}] \times \ldots \times [r_{n,0},r_{n,1}]$, \;
	  $i \in \{1,\ldots,n\}$, \; $x \in (r_{i,0} , r_{i,1})$;
	\ENSURE Rectangles $R_1,R_2,R_3$ which are added to the complex such that
	  $|R_1| \cup |R_2| = |R|$ and $R_3$ is the intersection of~$R_1$ and~$R_2$;
	\STATE List of rectangles $L$;
	\FOR { every~$S$ in~$\bdr{R}$ such that~$x$ is in the interior of
	  the projection of~$S$ onto the $i$-th coordinate direction. }
		\STATE Put $S$ into $L$;
	\ENDFOR
	
	\STATE Create a rectangle $R_1 = [r_{1,0},r_{1,1}] \times \ldots \times
	  [r_{i,0},x] \times \ldots \times [r_{n,0} , r_{n,1}]$;
	\STATE Create a rectangle $R_2 = [r_{1,0},r_{1,1}] \times \ldots \times
	  [x,r_{i,1}] \times \ldots \times [r_{n,0} , r_{n,1}]$;
	\STATE Create a rectangle $R_3 = [r_{1,0},r_{1,1}] \times \ldots \times
	  [x,x] \times \ldots \times [r_{n,0} , r_{n,1}]$;
	\STATE Put~$R_3$ into the boundary of~$R_1$ and~$R_2$;
	\FOR { every~$A$ in the boundary of $R$ such that $A \not \in L$ }
		\STATE Remove~$R$ from the coboundary of $A$ and replace it by either~$R_1$ or~$R_2$;
		\STATE Put~$A$ into the boundary of~$R_1$ or~$R_2$ wherever it belongs;
	\ENDFOR
	\FOR { every~$B$ in the coboundary of~$R$}
		\STATE Remove~$R$ from boundary of~$B$ and replace it by both~$R_1$ and~$R_2$;
		\STATE Put~$B$ into the coboundary of both~$R_1$ and~$R_2$;
	\ENDFOR
	\STATE Remove~$R$ from the complex;
	\FOR { every $S \in L$ }
		\STATE $(S_1,S_2,S_3) = SubdivideRectangle( S,i,x );$
		\STATE Put~$S_\ell$ into the boundary of~$R_\ell$, and~$R_\ell$ into
		the coboundary of~$S_\ell$ for $\ell \in \{1,2,3\}$;
	\ENDFOR
  \end{algorithmic}
\end{algorithm}
\begin{algorithm}[tb]
  \small
  \caption{Rectangle subdivision in all coordinate directions.}
  \label{alg:subdividingRectangleAllDirectionsAlgorithm}
  \begin{algorithmic}
	\REQUIRE $R = [r_{1,0},r_{1,1}] \times \ldots \times [r_{n,0} , r_{n,1}]$, \;
	  $x_i \in (r_{i,0} , r_{i,1})$ for $i \in \{1,\ldots,n\}$;
	\ENSURE Collection of rectangles obtained after subdividing~$R$ in
	  all coordinate directions;
	\STATE List of rectangles $L$;
	\STATE $L \leftarrow R$;
	\FOR { $dim = 1$ to $n$ }
			\STATE List of rectangles $L'$;
			\FOR { Every rectangle $R$ in the list $L$ }
				\STATE Call Algorithm~\ref{alg:subdividingRectangleAlgorithm}
				  for $R$, $dim$ and $x_{dim}$, which returns the top-dimensional
				  rectangles~$R_1$ and~$R_2$;
				\STATE $L' \leftarrow R_1, R_2$;
			\ENDFOR
			\STATE $L = L'$;
	\ENDFOR
	\STATE Return all the rectangles obtained in the last iteration of the loop;
  \end{algorithmic}
\end{algorithm}
\begin{figure}[tb]
  \centering
  \includegraphics[width=14cm]{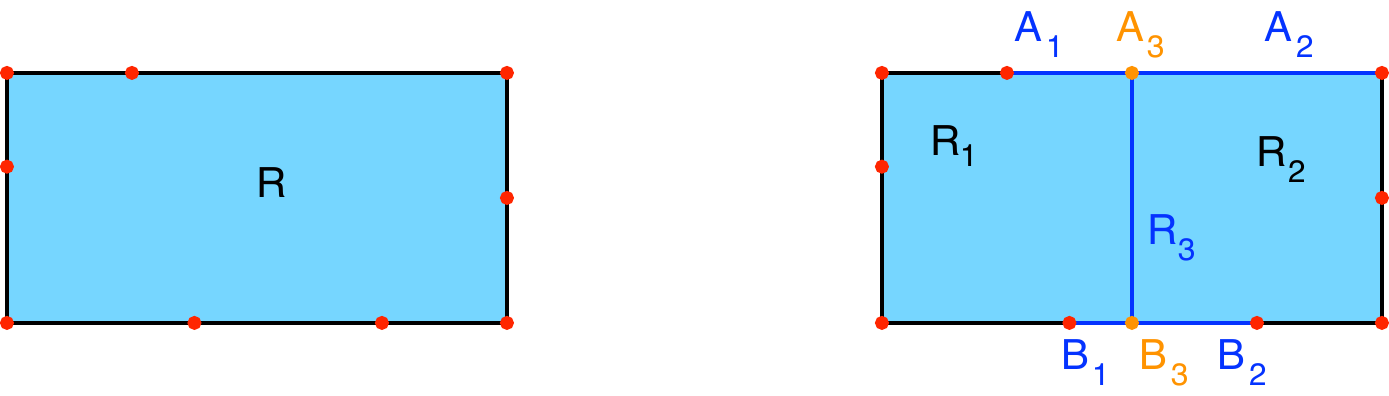}
  \caption{Illustration of Algorithm~\ref{alg:subdividingRectangleAlgorithm}.
           Given a two-dimensional rectangle~$R$ as shown on the left, as
           well as a coordinate direction~$i$, the rectangle is first divided
           into two two-dimensional rectangles~$R_1$ and~$R_2$ by subdividing
           the edges of~$R$ parallel to the $i$-th coordinate direction.
           This also creates their common face~$R_3$. Subsequently, the
           boundary elements of~$R$ which are parallel to the $i$-th
           coordinate direction and which contain the subdivision coordinate 
           in their interior are processed by recursive calls of the
           algorithm. This creates the new one-dimensional rectangles~$A_1$,
           $A_2$, $B_1$, and~$B_2$, and the new zero-dimensional rectangles~$A_3$
           and~$B_3$. Once all of these have been created, the boundaries and
           coboundaries are updated accordingly.}
  \label{fig:rectangleDivisionExample}
\end{figure}

To close this brief section, we would like to point out that the use
of rectangular CW-complexes as described above for the computation of
persistent homology is only one possible approach. Alternatively, one
could construct a simplicial complex directly from the collection of
top-dimensional cells obtained by the approximation algorithm from the
last section. Since all rectangles considered in our approach are convex
sets, one can construct a \emph{weighted nerve complex} which inherits
the (multi-dimensional) persistent homology from the original rectangular
CW-complex associated with the lower semi-continuous approximation of the
underlying continuous function. This is a consequence of the celebrated
nerve theorem, see~\cite{herbert} for more details. The nerve complex can
be constructed as follows. Its vertices are given by the top-dimensional
rectangles in the $\epsilon$-approximation. Moreover, a set of~$k+1$ of these
vertices forms a $k$-dimensional simplex, if the associated top-dimensional
cells have nonempty intersection. Clearly, in the situation of this paper
which is based on the subdivision of rectangles, for a planar set
only~$2^2 = 4$ rectangles can have nonempty intersections. For rectangular
sets embedded in~$\mathbb{R}^3$ one can have at most~$2^3 = 8$ intersections,
and in the general case of rectangles in~$\mathbb{R}^d$ the number of
intersections is bounded by~$2^d$, i.e., it is of exponential complexity
with respect to the embedding dimension~$d$. We would like to point out,
however, that it suffices to include only simplices up to the embedding
dimension~$d$, which leads to a significant reduction in the size of the
nerve complex without changing its associated persistent homology. From
a computational perspective, it is not immediately clear whether the
nerve complex leads to a more memory efficient representation than the
rectangular CW-complex constructed via the subdivision algorithm. Nevertheless,
we have decided to use the rectangular CW-complex approach, as it reflects
the function approximation. In a sense, the lower-dimensional rectangles
can be thought of as transition elements between one function value and
another. In addition, our approach makes it easier to determine persistent
homology in the case of periodic boundary conditions. For this, one only
has to impose the periodic boundary conditions on the initial CW-complex
representation of the domain --- and the presented subdivision algorithms
automatically lead to the correct persistent homology, as long as this
initial CW-complex configuration contains at least two top-dimensional
rectangles in each coordinate direction.
\subsection{Rigorous Persistence Approximation}
\label{sec:cubicalApproximationPersistence}
In this section, we combine the $\epsilon$-approximation algorithm
presented in Section~\ref{sec:approximationContFunctions} with the
rectangle subdivision scheme presented in Section~\ref{sec:SubdivAlgorithm}.
This allows us to construct a filtered regular CW-complex which
rigorously approximates a given continuous function~$f$ on a rectangular
domain~$R$, and at the same time provides a rectangular CW-complex
representation of~$R$ compatible with the filtering function. The
construction will be performed in such a way that the (multi-dimensional)
persistence of the final rectangular CW-complex can easily be computed,
and that the corresponding persistence diagram deviates from the one
for the underlying function~$f$ by less than~$\epsilon$ with respect
to the bottleneck distance. The detailed procedure is listed in
Algorithm~\ref{alg:computePersistenceOfAFunction}, where the initial
rectangle~$R$ is subdivided according to either of the algorithms
contained in Section~\ref{sec:SubdivAlgorithm} until the rigorous
enclosure~$\hat{f}(Q)$ of the range of~$f$ over a rectangle~$Q$ deviates
from the midpoint value by at most~$\epsilon$. When this 
criterion is met for all rectangles~$Q$ in the subdivision of~$R$,
the final CW-complex decomposition is returned together with the
cell values set as the function values of~$f$ at the cell midpoints.
See Algorithm~\ref{alg:computePersistenceOfAFunction} for more details.
\begin{algorithm}[tb]
  \small
  \caption{Rigorous approximation of the persistence of a continuous function.}
  \label{alg:computePersistenceOfAFunction}
  \begin{algorithmic}
    \REQUIRE Compact rectangle $R \subset \mathbb{R}^n$,
	         continuous function $f : R \rightarrow \mathbb{R}$,
	         error bound $\epsilon > 0$.
    \ENSURE Persistence diagram whose distance to the diagram of~$f$ is at most~$\epsilon$.
    \STATE Queue $\mathcal{Q}$;
    \STATE $\mathcal{Q} \leftarrow R$;
    \WHILE {$\mathcal{Q}$ is not empty}
    	\STATE $Q \leftarrow \mathcal{Q}$;
		\STATE $v = midpt(\hat{f}(Q))$;
        \IF { $(max(\hat{f}(Q)) - v \le \epsilon) \;\; {\rm and} \;\; (v - min(\hat{f}(Q)) \le \epsilon$) }
			\STATE $val(Q) = v$;
		\ELSE
			\STATE Subdivide $Q$ in all directions.
			       Put all resulting rectangles in~$\mathcal{Q}$;
		\ENDIF
    \ENDWHILE
    \STATE Determine the lower star filtration of the final complex and compute its
           persistent homology;
    \RETURN Persistence of the resulting rectangular CW-complex;
  \end{algorithmic}
\end{algorithm}

It is clear from our construction, that at the end of the
\emph{while}-loop in Algorithm~\ref{alg:computePersistenceOfAFunction}
one has constructed a lower semi-continuous piecewise constant
function~$\Box f$ of the given continuous function~$f$ as described
earlier in Section~\ref{sec:approximationContFunctions}. We would
like to recall that this approximation satisfies the estimate
\begin{displaymath}
  \left| f(x) - \Box f(x) \right| \le \epsilon
  \quad\mbox{ for all }\quad
  x \in R \; ,
\end{displaymath}
which is guaranteed through the use of rigorous interval arithmetic.
From a computational perspective, the algorithm presented above
returns a rectangular CW-complex~$\mathcal{X}$ in which every
top-dimensional rectangle~$Q$ has been assigned a value~$val(Q)$.
For the computation of persistence, however, we need a fast way to
determine the filtration on~$\mathcal{X}$ which is induced by the
approximating function~$\Box f$. This can be accomplished as follows.
According to our construction, the value of a top-dimensional
rectangle~$Q$ is given by~$val(Q)$. If, on the other hand, we consider
a lower-dimensional rectangle~$S$ in~$\mathcal{X}$, and if $S_1,
\ldots, S_k$ denote all top-dimensional rectangles in~$\mathcal{X}$
which contain~$S$ in their boundary, then we define the value of~$S$
via $val(S) = min\{ val(S_1), \ldots, val(S_k) \}$. These definitions
give rise to a filtration of all rectangles in~$\mathcal{X}$, and it
is not difficult to verify that for every threshold $\alpha \in \mathbb{R}$
the collection of all rectangles~$Q$ with $val(Q) \le \alpha$ is a
CW-subcomplex of~$\mathcal{X}$, whose geometric representation consists
of all $x \in R$ for which $\Box f(x) \le \alpha$. In other words, this
filtration of rectangles provides a fast way to determine the filtration
on~$\mathcal{X}$ induced by~$\Box f$. The above construction is referred
to as the \emph{lower star filtration} on~$\mathcal{X}$ induced by the
values of the top-dimensional rectangles.

Using the lower star filtration on all rectangles one can
now use standard persistence software to determine the associated persistence
diagram. In the case of one-dimensional persistence, it follows from
Theorem~\ref{th:stabilityTheorem} that this persistence diagram has bottleneck
distance at most~$\epsilon$ from the persistence diagram of~$f$ over the
rectangle~$R$. In addition, we would like to point out that
Algorithm~\ref{alg:computePersistenceOfAFunction} also works --- without
significant changes --- for a vector-valued function $f : R \rightarrow
\mathbb{R}^m$. In this case one obtains an approximation~$\Box f$ which
can be used to approximate multi-dimensional persistence. In view of
Theorem~\ref{th:foliationPaper} and similar stability theorems, one
retains the same error bound~$\epsilon$ for the approximation results.
\begin{remark} \label{rem:pers1}
For the above presentation, we have always assumed that the underlying
function~$f$ is at least continuous. The main reason for this is our
requirement that~$f$ has to be amenable to rigorous arithmetic evaluations.
Most available software packages for rigorous arithmetic, such as the
interval arithmetic implementations used throughout this paper, consider
only standard arithmetic operations and standard functions, all
of which are continuous on their domains of definition. Nevertheless,
in principle the above approach can be extended to general \emph{tame
functions}, which ensure that the associated persistence diagrams are
finite. See~\cite{chazal:etal:09a, chazal:etal:12a} for more details.
Note, however, that in general it is not easily possible to verify
whether a given function is tame or not. Moreover, it is possible that
Algorithm~\ref{alg:computePersistenceOfAFunction} terminates for a
non-tame function, provided there are only finitely many persistence
intervals which are longer than~$\epsilon$. Needless to say, this latter
fact is a necessary condition for the termination of
Algorithm~\ref{alg:computePersistenceOfAFunction}.
\end{remark}
\begin{remark} \label{rem:pers2}
In the collection of persistence intervals produced by
Algorithm~\ref{alg:computePersistenceOfAFunction}, there generally
are intervals of length at most~$\epsilon$. Intervals of this type
do not have to correspond to nontrivial intervals in the persistence
diagram of the given function~$f$. For this reason, such intervals will
typically be ignored.
\end{remark}
\subsection{Greedy Approximation}
\label{sec:greedyApproximation}
It was already mentioned earlier that the algorithm presented in
Section~\ref{sec:cubicalApproximationPersistence} does not necessarily
terminate, even if the underlying function~$f$ is Lipschitz continuous.
On the one hand, this can be a consequence of the finite precision
inherent in rigorous computer arithmetic. When a certain subdivision
depth is reached, the range enclosures of~$f$ over a rectangle may not
become smaller even if further subdivisions are performed. On the 
other hand, a given continuous function~$f$ does not necessarily have 
to be tame, i.e., it will not give rise to a finite persistence
diagram.
\begin{algorithm}[tb]
  \small
  \caption{Greedy approximation of the persistence of a function.}
  \label{alg:greedyApproximationOfPersistence}
  \begin{algorithmic}
	\REQUIRE Compact rectangle $R \subset \mathbb{R}^n$,
	         continuous function $f : R \rightarrow \mathbb{R}$.
    \ENSURE Approximate persistence diagram of~$f$, together with an
            upper bound for the bottleneck distance between the approximation
            and the correct diagram.
    \STATE Priority queue $\mathcal{Q}$. The priority of a rectangle~$Q$ is
           defined as the radius $rad(\hat{f}(Q))$;
    \STATE $\mathcal{Q} \leftarrow R$;
    \STATE $Error\_of\_Approximation = rad(\hat{f}(R))$;
    \REPEAT 
		\STATE $Q \leftarrow$ element with highest priority in~$\mathcal{Q}$;
		\STATE { $Error\_of\_Approximation = rad(\hat{f}(Q))$};
		\STATE Subdivide $Q$ in all directions. Put all the resulting rectangles
		    into~$\mathcal{Q}$, ordered by priority;
    \UNTIL{The user interrupts the program or a maximal number of subdivisions is reached};
    \FOR { Every rectangle~$Q$ in the final rectangular CW-complex }
		\STATE $val(Q) = midpt(\hat{f}(Q))$;
    \ENDFOR
    \STATE Determine the lower star filtration and compute persistent homology
           of the resulting complex;
    \RETURN Persistence of the final rectangular CW-complex and the
           $Error\_of\_Approximation$;
  \end{algorithmic}
\end{algorithm}

To accommodate such situations, we close this section with an algorithm
which removes the input parameter~$\epsilon$, inspired by the method
in~\cite{Edels2}. This leads to a procedure which iteratively subdivides
the rectangles in the subdivision of~$R$ in a greedy way until the process
is stopped by the user --- which could be accomplished either by an
interactive interrupt, or by specifying a fixed number of subdivisions
ahead of time. The overall goal of the algorithm is to increase the
precision of the piecewise constant function approximation~$\Box f$
at every subdivision. For this, the algorithm always subdivides the
subrectangle~$Q$ of~$R$ for which the \emph{radius} of the range
enclosure~$\hat{f}(Q)$ is maximal, i.e., the regions over which
the range of~$f$ has the largest variation are subdivided first.
In this context, the radius of an interval~$I \subset \mathbb{R}$
is defined by
\begin{equation} \label{def:radI}
  rad(I) =
  max\left\{ max(I) - midpt(I) , \; midpt(I) - min(I) \right\} \; ,
\end{equation}
and this definition does provide an upper bound on the actual
radius in computer arithmetic implementations.\footnote{Notice that
in infinite precision, the two members of the set on the right-hand
side of~(\ref{def:radI}) have to be the same, i.e., the radius of the
interval is exactly half its width. However, when using finite arithmetic
on a computer, either or both of these numbers might not be a representable
number, which leads to interval answers. In this case, the definition
in~(\ref{def:radI}) ensures that~$rad(I)$ contains a rigorous and tight
upper bound on the radius of the interval.} The
whole procedure can easily be implemented using a priority queue, and
upon termination, the algorithm returns a rigorous upper bound on
the approximation error which is exactly the largest of the range
enclosure radii. The precise implementation of the algorithm can
be found in Algorithm~\ref{alg:greedyApproximationOfPersistence},
and it furnishes a piecewise constant approximation~$\Box f$ for the
given continuous function~$f$ defined on the rectangle~$R$, together
with the rigorous error bound $Error\_of\_Approximation$. This
algorithm can trivially be extended to the case of vector-valued
functions~$f: R \rightarrow \mathbb{R}^m$.
\section{Numerical Case Studies}
\label{sec:experiments}
In this final section of the paper we present a few case studies
which demonstrate how the proposed algorithms perform in practice.
The numerical experiments include a one-dimensional continuous,
but not everywhere differentiable function, a two-dimensional
sample function which serves as a standard benchmark in nonlinear
optimization, as well as an example from mathematical materials
sciences. An implementation of the algorithms used to perform
these computations will be made available upon acceptance of
this paper. It can be found at \url{https://github.com/pdlotko/FunTop}, and is available under the
GPLv3 license.
\subsection{Two Benchmarks from Optimization}
As a first example of the algorithms in this paper we consider
the function
\begin{displaymath}
  f(x) = \frac{\left| \sin(6 \pi x) \right|}{1 + x^2} +
         \frac{3 \cos(2 \pi x)}{10}
  \quad\mbox{ for }\quad
  x \in R = [0,1] \; .
\end{displaymath}
This function is clearly continuous, but not everywhere
differentiable. While it is not possible to use the methods
of~\cite{miro} to approximate the persistence of~$f$, our 
Algorithm~\ref{alg:computePersistenceOfAFunction} can be
applied. The results of this method for $\epsilon = 0.18$,
$\epsilon = 0.06$, and $\epsilon = 0.02$ were already presented
in Figure~\ref{fig:cornerex}. In these persistence diagrams, the 
dashed red line indicates all points above the diagonal whose
maximum norm distance to the diagonal is exactly equal to~$\epsilon$.
In view of Remark~\ref{rem:pers2}, this means that only in the upper
right persistence diagram, which corresponds to $\epsilon = 0.18$,
one of the generators cannot be established rigorously. However,
for the two smaller $\epsilon$-values all generators have been 
proven to exist. They indicate that the function has seven local
minima, that these minima start appearing in a stratified way with
respect to increasing function values, first one, and then in three
pairs of two, and that between any two consecutive minima a local
maximum annihilates homology components. In this sense, our algorithms
can be easily used to rigorously solve one-dimensional optimization
problems.

But the methods are by no means restricted to one space dimension.
Consider for example the two-dimensional function
\begin{equation} \label{def:ackley1}
  f(x,y) = g(45 x - 15, 45 y - 15) \; ,
\end{equation}
where the function~$g$ is the standard two-dimensional Ackley
function given by
\begin{equation} \label{def:ackley2}
  g(x,y) = a + e - a e^{-b \sqrt{(x^2 + y^2) / 2}} -
           e^{(\cos(c x) + \cos(c y)) / 2} \; .
\end{equation}
For our simulations, we choose the parameters $a = 20$, $b = 1/5$,
and $c = \pi / 2$, and we consider the function~$f$ on the rectangle
$R = [0,1]^2 \subset \mathbb{R}^2$. A partially constant approximation
of this function has already been shown in Figure~\ref{fig:introex},
and the actual function~$f$ is shown in Figure~\ref{fig:ackley2D}.
\begin{figure}[tb]
  \centering
  \includegraphics[width=14cm]{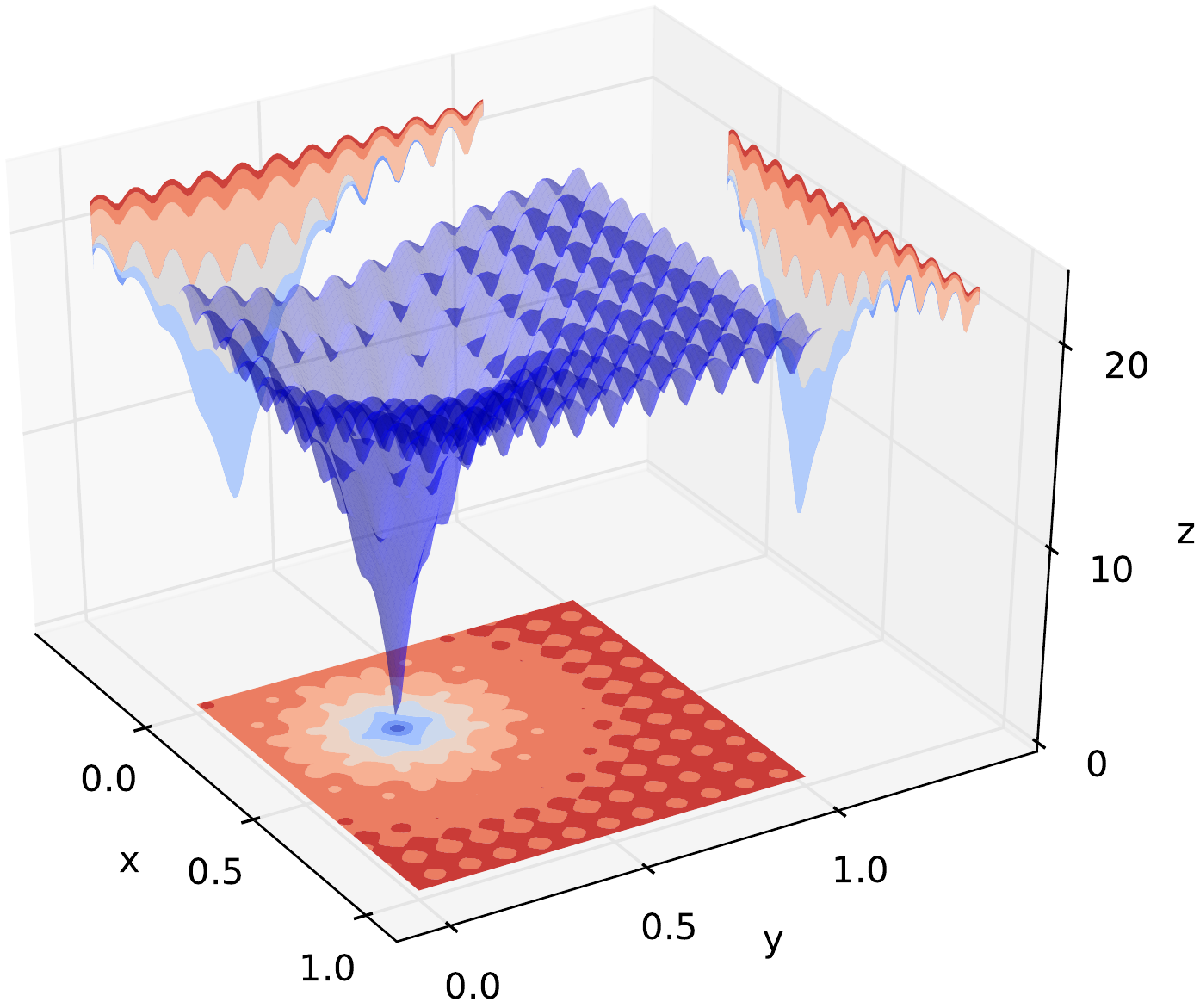}
  \caption{Visualization of the scaled Ackley function in two
           dimensions on the domain $[0,1]\times[0,1]$. The figure
           shows the graph of the function~$f$ defined
           in~(\ref{def:ackley1}), which is based on the standard
           two-dimensional Ackley function~$g$ given in~(\ref{def:ackley2}).
           The function~$g$ is one of the basic benchmark functions in
           nonlinear optimization due to its unique global minimum
           at~$(0,0)$, combined with a large number of local minima.
           Recall that a rigorous piecewise constant approximation~$\Box f$
           for $\epsilon = 2$ is shown in the right panel of
           Figure~\ref{fig:introex}.}
  \label{fig:ackley2D}
\end{figure}
\begin{table}
  \centering
  \begin{tabular}{| c | c | c | c | c | c | c |}
  \hline
  $\epsilon$ & 1      & 0.75    & 0.5     & 0.4     & 0.3     & 0.25  \\
  \hline
  time    &0.003487 & 0.008145 & 0.018178 & 0.027688 & 0.029342 & 0.132159 \\
  \hline 
  $\epsilon$ & 0.2  & 0.1 & 0.075 & 0.05 & 0.025 & 0.02\\
  \hline 
  time & 0.203003 & 2.17143 & 6.62386 & 38.8436  & 632.568 & 1704.78\\
  \hline
  \end{tabular}
  \caption{Computation times for rigorously determining the persistent
           homology of the function~$f$ shown in Figure~\ref{fig:ackley2D},
           for a variety of values of the parameter~$\epsilon$.}
  \label{table:ackley2D}
\end{table}

For the above function~$f$ we determined rigorous
$\epsilon$-approximations of the associated persistence
diagram, for a variety of $\epsilon$-values. The results 
from these computations can be found in Table~\ref{table:ackley2D}.
These computational times seem to exhibit quadratic growth
in~$1 / \epsilon$, as would be expected from our
Theorem~\ref{thm:complexity}. In fact, simple quadratic
regression implies a near perfect fit of the computational
data with the polynomial $T(s) = 71.94 s^2 - 25.36 s +1.11$,
and with correlation coefficient~$0.9869$.
\subsection{The Diblock-Copolymer Model}
Topological methods have seen a wide variety of applications
in the physical sciences, see for example the articles in the
special issue~\cite{day:etal:16a}. We therefore close this 
paper with a related application of our methods to the 
rigorous approximation of persistence in an example from
materials science. We consider the so-called \emph{diblock
copolymer model} which is the fourth-order parabolic 
partial differential equation given by
\begin{eqnarray}
  u_t & = & -\Delta \left( \epsilon^2 \Delta u + F(u) \right)-
    \sigma(u-\mu)
    \quad\mbox{ in }\; \Omega
    \; , \label{appl:dbcp:eqn} \\[1ex]
  & & \mu = \frac{1}{|\Omega|} \int_{\Omega} \! u(x) \, dx \; ,
    \quad\mbox{ and }\quad
    \frac{\partial u}{\partial \nu} =
    \frac{\partial \Delta u}{\partial \nu} = 0
  \quad\mbox{ on }\; \partial \Omega \; ,
\nonumber
\end{eqnarray}
which is an evolution equation for the unknown
function~$u : \R_0^+ \times \Omega \to \R$, where $\Omega
\subset \R^d$ denotes an arbitrary domain with sufficiently
smooth boundary. In this formulation, the function~$F$ is
the negative derivative of a double-well potential, and 
one usually considers $F(u) = u - u^3$. Moreover, the directional
derivative~$\partial u / \partial\nu$ denotes the derivative of~$u$
in the direction of the outward unit normal vector at a point
on the boundary~$\partial\Omega$.
\begin{figure}[tb]
  \centering
  \includegraphics[width=4.5cm]{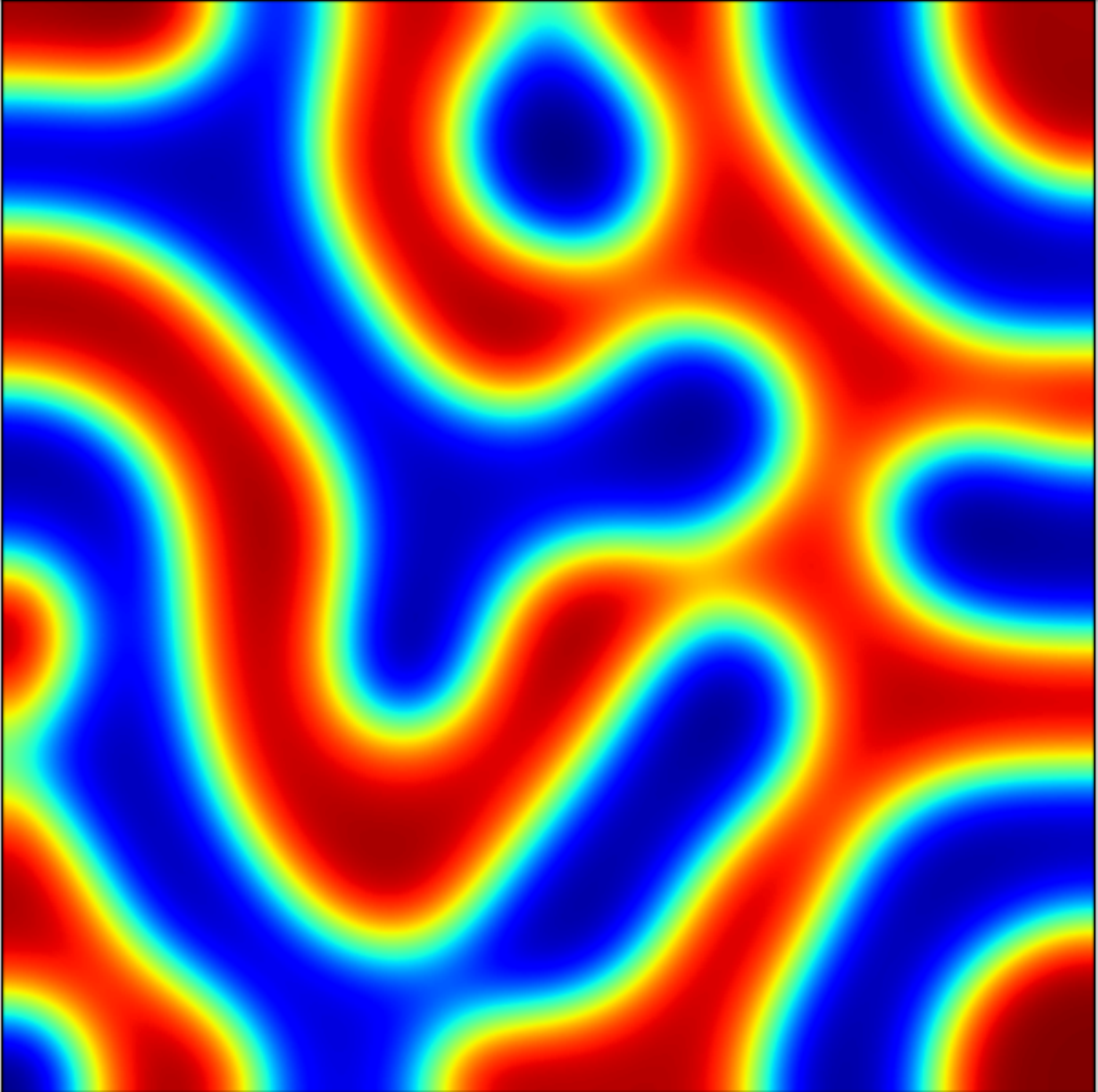}
  \hspace{0.3cm}
  \includegraphics[width=4.5cm]{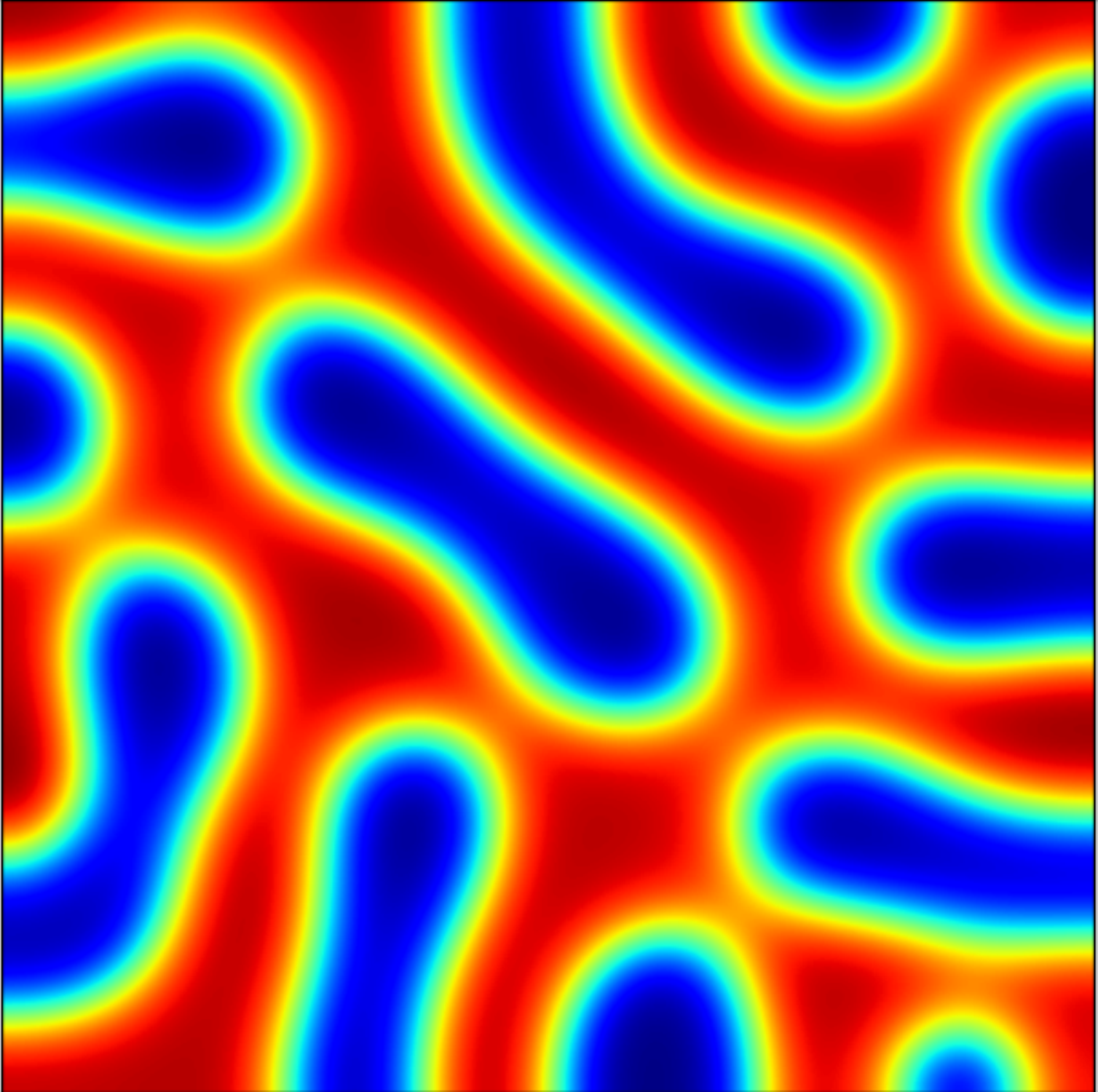}
  \hspace{0.3cm}
  \includegraphics[width=4.5cm]{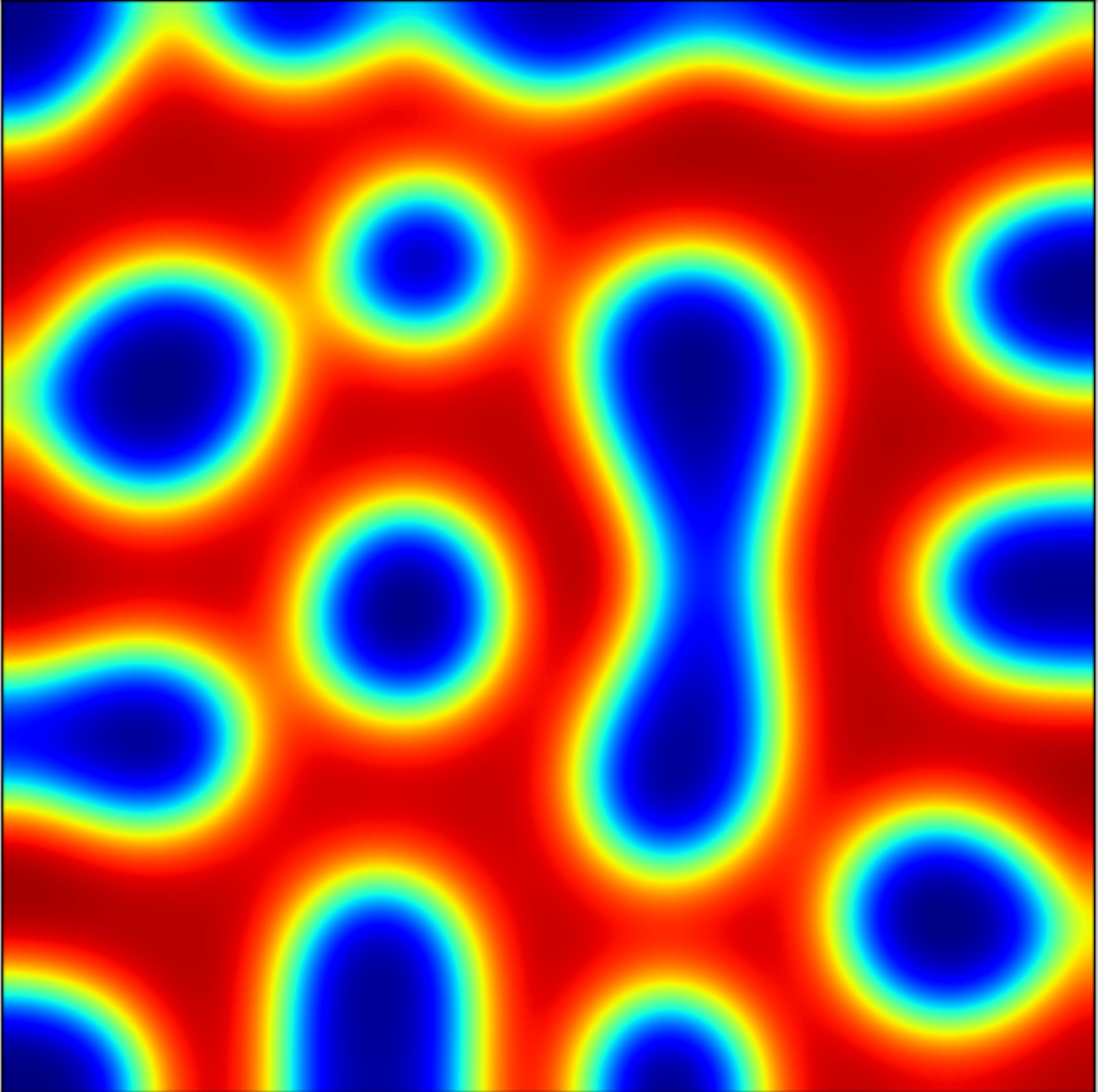} \\[3ex]
  \includegraphics[width=4.5cm]{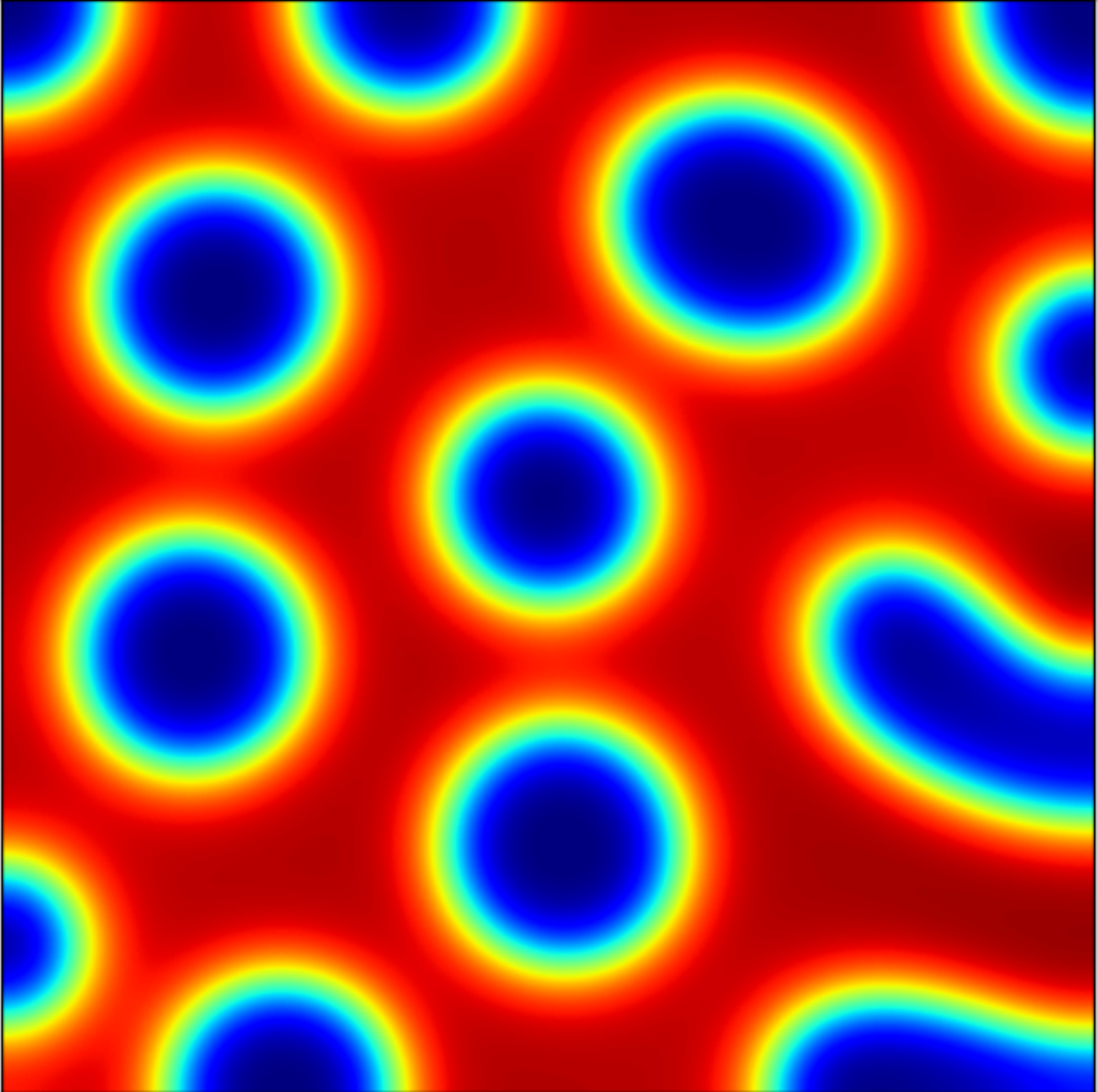}
  \hspace{0.3cm}
  \includegraphics[width=4.5cm]{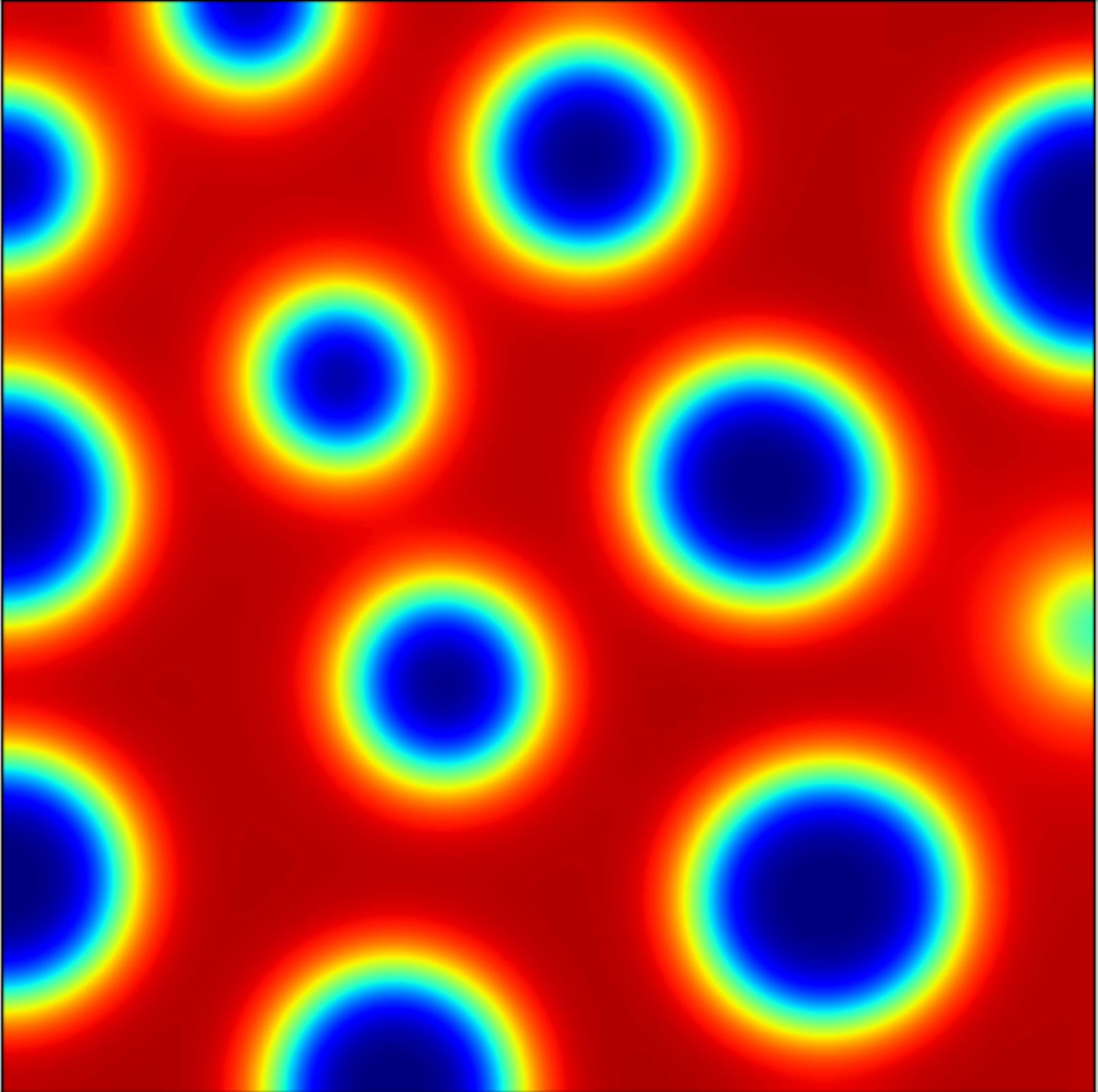}
  \hspace{0.3cm}
  \includegraphics[width=4.5cm]{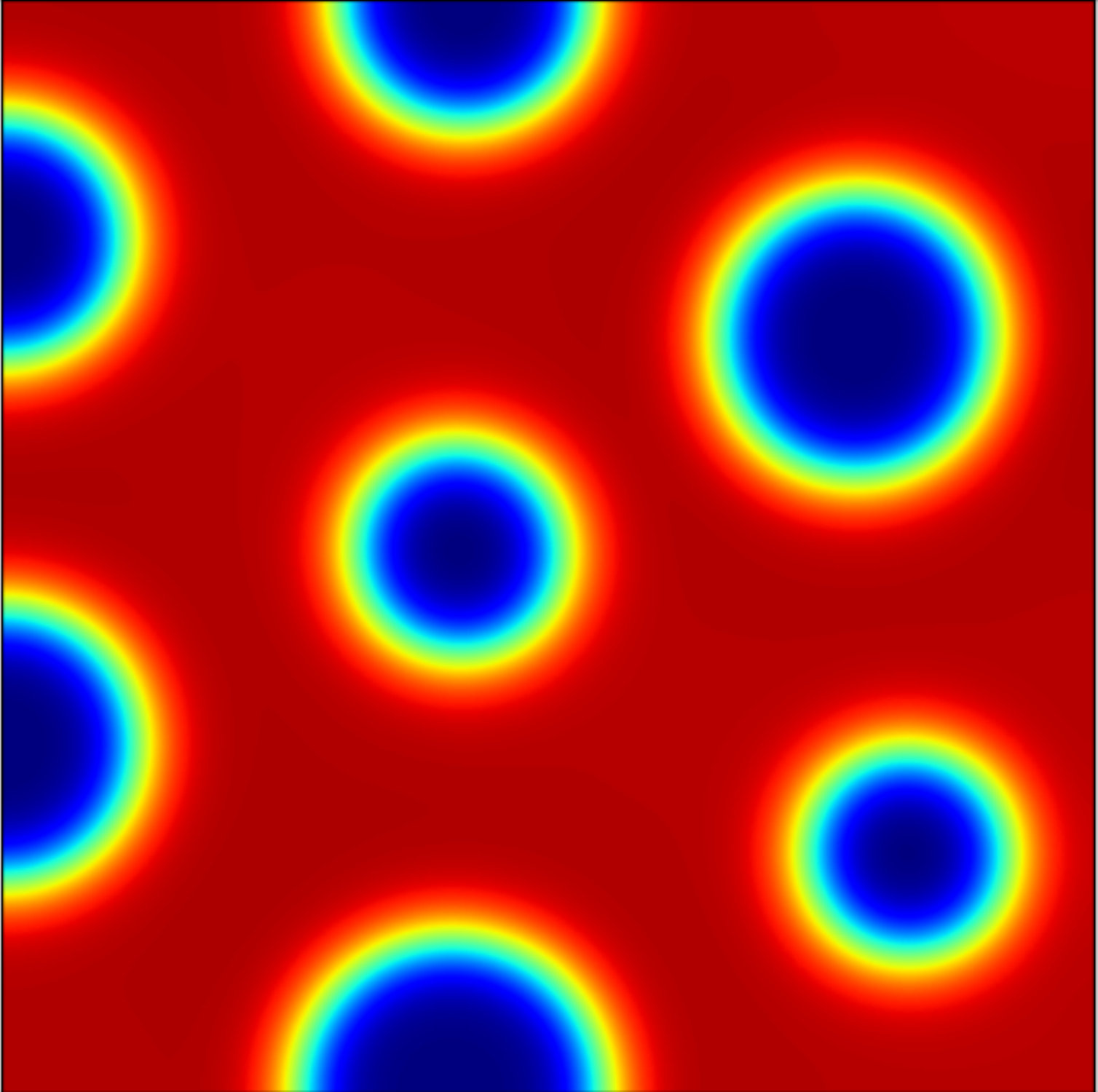}
  \caption{Sample patterns produced by the diblock copolymer
           model~(\ref{appl:dbcp:eqn}) on the two-dimensional square
           domain $\Omega = (0,1)^2$. The images are all for the parameter
           values $\epsilon = 0.025$ and $\sigma = 10$, and for
           end times which correspond to the formation of distinct
           phases. From top left to bottom right the images are
           for mass values $\mu = 0, \; 0.1, \; \ldots, \; 0.5$.}
  \label{fig:dbcppattern}
\end{figure}

While a more complete introduction to the model can be found
in~\cite{wanner:16a}, we only focus on a few important aspects.
The partial differential equation~(\ref{appl:dbcp:eqn}) models
phase separation phenomena in polymer materials which are composed
of two chemically incompatible monomers, say of type~A and type~B,
which are arranged in long polymer chains. The value of the phase
variable~$u$ describes the local material composition in the
following way. Function values of the solution~$u(t,x)$ which
are close to~$+1$ are interpreted as only monomer~A being present
near point $x \in \Omega$ and at time~$t \ge 0$, and the value~$-1$
indicates that only monomer~B is present. Finally, values in
between correspond to mixtures of the two components, with zero
representing an equal mixture. The parameter~$\mu$ denotes the
average mass of the mixture using the same convention, and the two
remaining parameters~$\epsilon > 0$ and~$\sigma \ge 0$ are dimensionless
interaction lengths. Informally, $\epsilon > 0$ being small
corresponds to short range repulsions being strong, inducing a strong
compulsion to separate, while~$\sigma$ being large represents strong
long range chain elasticity forces, inducing a strong compulsion
to hold together. Notice that for~$\sigma = 0$ the diblock copolymer
model~(\ref{appl:dbcp:eqn}) reduces to the celebrated
Cahn-Hilliard equation, which serves as a basic model for
the phase separation phenomena.

If one considers solutions~$u$ which start at a small random
perturbation~$u_0$ of the constant initial state~$\mu$, and if one 
assumes that the average value of~$u_0$ equals~$\mu$, then solutions
quickly start to grow in amplitude, until the function values
reach values close to~$\pm 1$. At this point, the phases have 
separated and one can observe a wide variety of patterns, see for
example Figure~\ref{fig:dbcppattern}. One can clearly see from
these images that the topology of the phase separated state depends
on the total mass~$\mu$. In fact, even though different initial
states lead to different microstructures, the observed patterns for
fixed~$\mu$ share the same characteristic features. In other words,
the parameter~$\mu$ clearly affects the observed topology of the
pattern.

In our recent paper~\cite{dwphysicad} we studied a stochastic
version of the Cahn-Hilliard equation to show that in fact the
converse is true as well, i.e., that the topology of the observed 
patterns can be used to determine the underlying total mass~$\mu$
to surprising accuracy. This was accomplished in the following
way. For every mass value, we performed a number of Monte-Carlo
type simulations, to obtain a collection of persistence diagrams
for the resulting microstructures. By equivalently reformulating
the diagrams as persistence landscapes, it is then possible to
determine an averaged persistence landscape, which can be thought
of as the ``typical'' pattern topology for a given $\mu$-value.
If one then considers an arbitrary microstructure, one can determine
the associated mass value by finding the closest averaged landscape.
This classification scheme exhibits surprising accuracy in
determining the parameter~$\mu$.
\begin{figure}[tb]
  \centering
  \includegraphics[width=7.5cm]{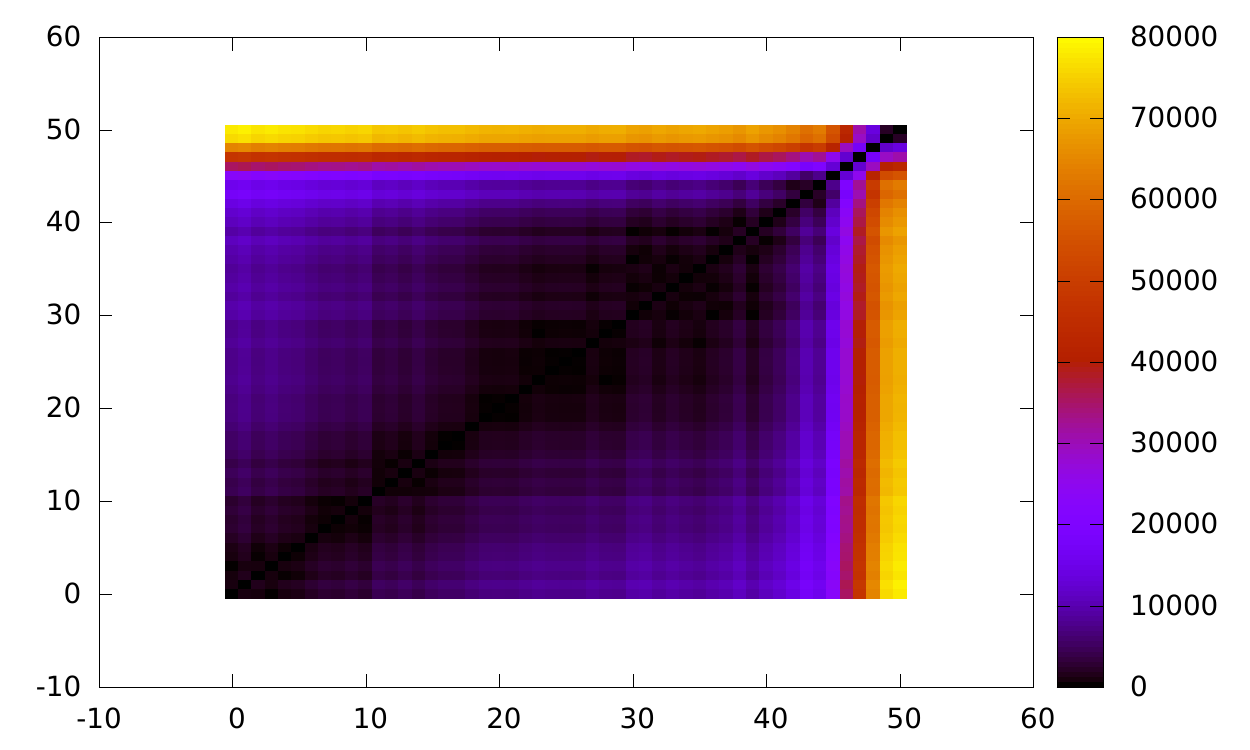}
  \hspace{0.5cm}
  \includegraphics[width=7.5cm]{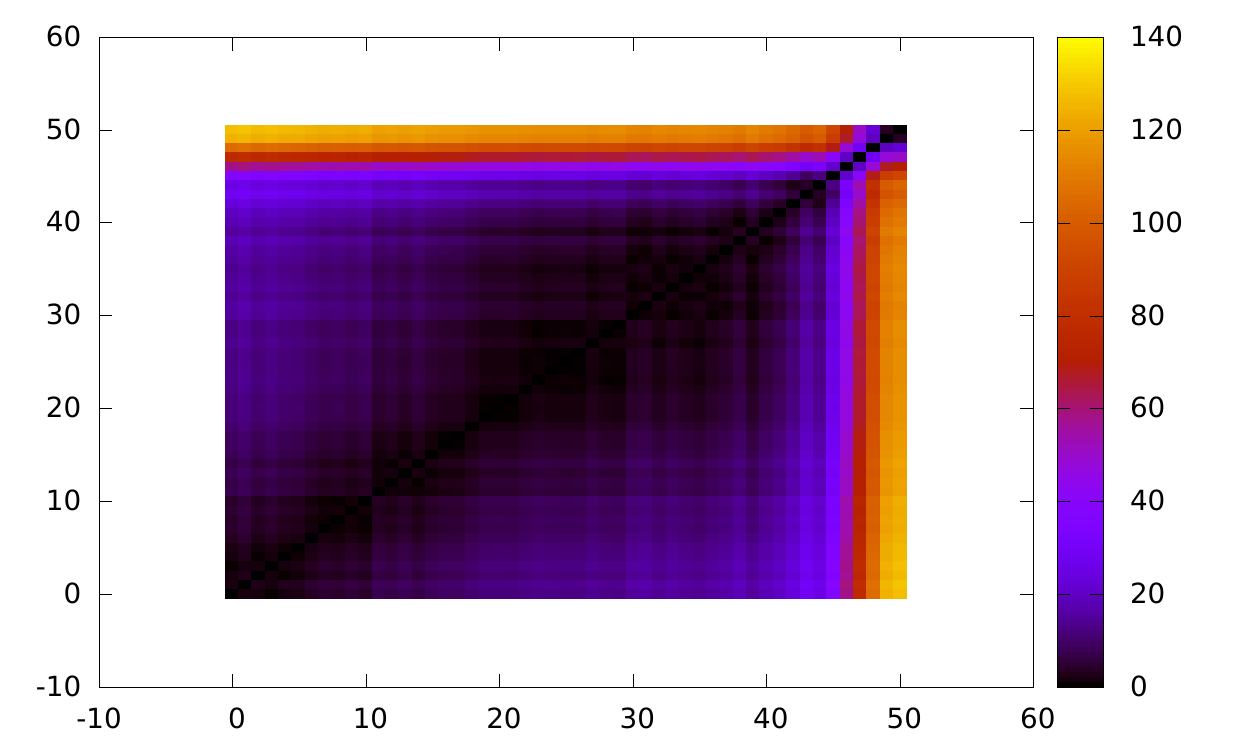} \\[3ex]
  \includegraphics[width=7.5cm]{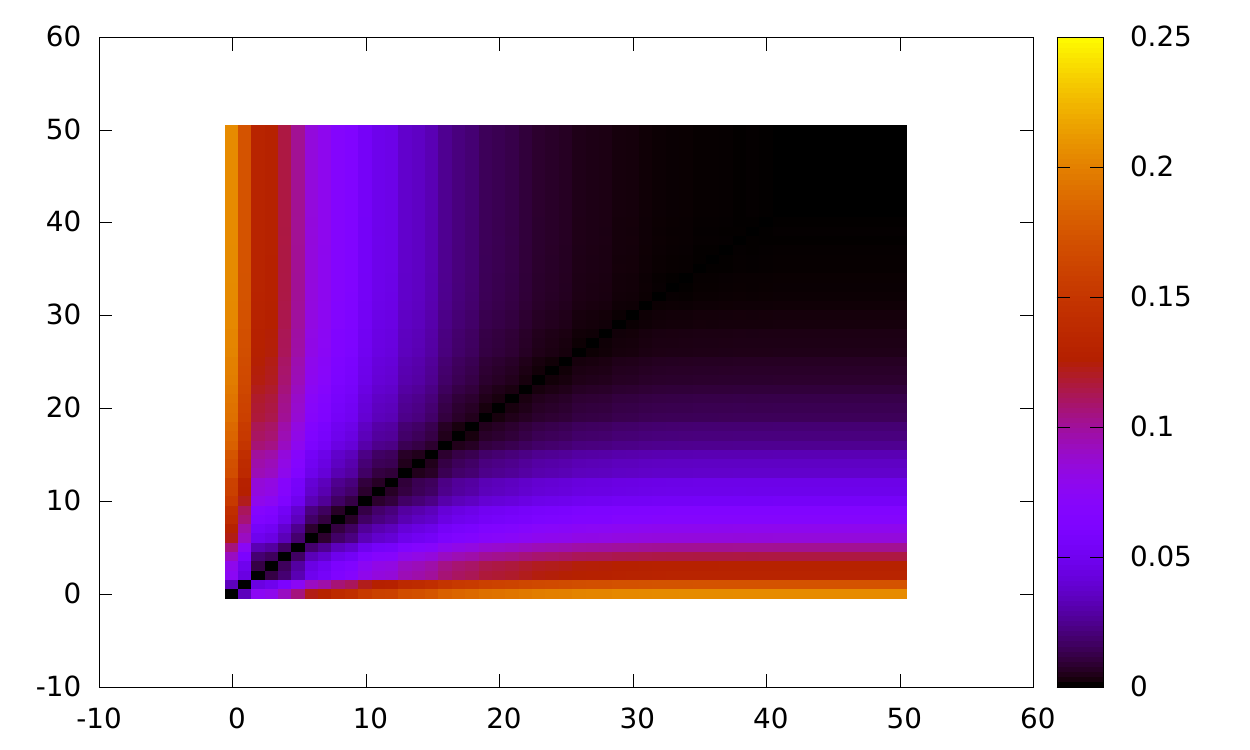}
  \hspace{0.5cm}
  \includegraphics[width=7.5cm]{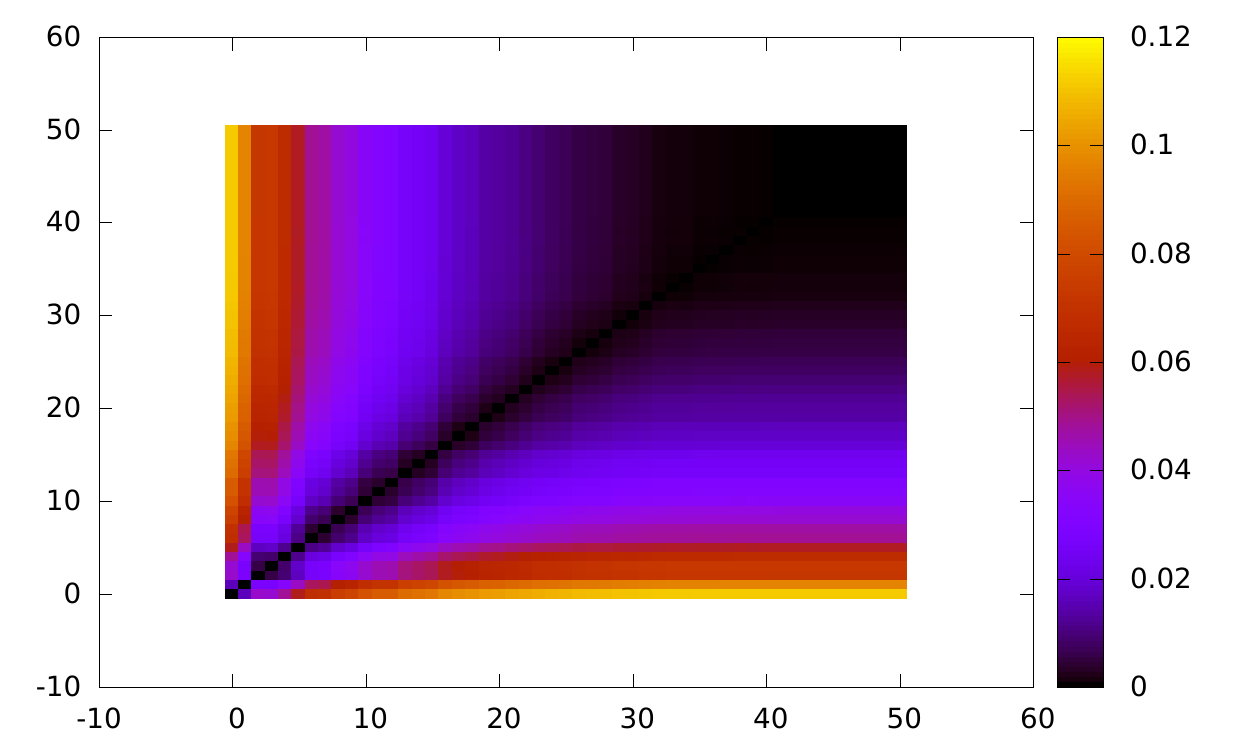}
  \caption{Heat plots of the distance matrices for the averaged
           persistence landscapes of phase separated states in
           the diblock copolymer model~(\ref{appl:dbcp:eqn}).
           The horizontal and vertical axes correspond to~$100\mu$,
           i.e., the images consider mass value~$\mu$ between
           $\mu = 0$ and $\mu = 0.5$ in steps of~$0.01$.
           The top row shows distance matrices for
           the $0$-dimensional persistence landscapes,
           while the bottom row is for dimension~$1$.
           The left column uses the $L^1$-norm to measure
           the distance between the landscapes, the right
           column employs the $L^2$-norm.}
  \label{fig:dbcpDistanceData}
\end{figure}

Rather than performing an analogous study of the diblock copolymer
equation for $\sigma \neq 0$ in full detail, we use the methods
developed in this paper to determine the distance matrices between
the averaged persistence landscapes for each considered mass value.
We consider the same parameter values as in Figure~\ref{fig:dbcppattern},
i.e., we let $\epsilon = 0.025$ and $\sigma = 10$, and consider the
two-dimensional square domain $\Omega = (0,1)^2$. Finally, we simulated
the equation at the~$51$ mass values $\mu = k \cdot 0.01$, with 
multiplier $k = 0,\ldots,50$. In each case, one hundred Monte-Carlo
simulations were performed to determine the averaged persistence
landscape as in~\cite{plt}, from the associated~$100$ persistence
diagrams of phase separated patterns. All of the necessary persistence
diagrams were computed using Algorithm~\ref{alg:computePersistenceOfAFunction} with the error bound set to $0.2$.
The simulations of the partial differential equation employed a
semi-linear spectral method with~$128^2$ eigenmodes for the solution
representation, and~$256^2$ modes for the nonlinearity computations.

The results from these computations can be found in
Figure~\ref{fig:dbcpDistanceData}. This figure contains four
heat plots for the distance matrices of the averaged persistence
landscapes. In each of the plots, the horizontal and vertical axes
correspond to the scaled mass parameter~$100\mu$, i.e., they reflect
total mass values~$\mu$ between $\mu = 0$ and $\mu = 0.5$ in steps
of~$0.01$. The top row contains distance matrices for the
$0$-dimensional persistence landscapes, while the bottom row is
for dimension~$1$. Moreover, the left column uses the $L^1$-norm
to measure the distance between the landscapes, and the right
column employs the $L^2$-norm. These heat maps indicate that
depending on the dimension of the persistence landscape, different
regions of $\mu$-values can be differentiated. Since we are using
nonnegative mass values, for large~$\mu$ one expects a larger
number of persistence intervals, which correspond to the isolated
(negative) droplets in the (positive) background matrix. Thus,
the heat maps in the first row show greater variability for
$\mu \approx 0.5$. In contrast, in the droplet regime, the 
sublevel sets of the function~$u$ are very unlikely to contain
any homology generators in dimension one, and this is reflected
in the large region of distance close to zero in the two bottom
panels of Figure~\ref{fig:dbcpDistanceData}. Nevertheless, for
mass values~$\mu$ close to zero nontrivial homology can be observed
in dimension one, and this leads to the greater variability of the
heat maps in the lower left corner. These observations are very
much in line with the ones in~\cite{dwphysicad}. However, we 
refrain from a more detailed analysis, as it is not the main
topic of the present paper. Nevertheless, these preliminary
results can be considered as a first step towards a quantitative
topological understanding of the topology of microstructures
generated by the diblock copolymer equation.
\section*{Acknowledgments}
The first author of the paper would like to thank Michael Kerber for suggesting
the algorithm presented in the Section~\ref{sec:greedyApproximation}, and Steve
Oudot for all the helpful comments. The first author was supported by the Advanced
Grant of the European Research Council GUDHI (Geometric Understanding in Higher
Dimensions). The second author was partially supported by NSF grants
DMS-1114923 and DMS-1407087.

\end{document}